\documentclass{article}
%%%%%%%%%%%%%%%%%%%%%%%%%%%%%%%%%%%%%%%%%%%%%%%%%%%%%%%%%%%%%%%%%%%%%%%%%%%%%%%%%%%%%%%%%%%%%%%%%%%%%%%%%%%%%%%%%%%%%%%%%%%%%%%%%%%%%%%%%%%%%%%%%%%%%%%%%%%%%%%%%%%%%%%%%%%%%%%%%%%%%%%%%%%%%%%%%%%%%%%%%%%%%%%%%%%%%%%%%%%%%%%%%%%%%%%%%%%%%%%%%%%%%%%%%%%%
\usepackage{amsfonts}
\usepackage{amsmath}

\setcounter{MaxMatrixCols}{10}
%TCIDATA{OutputFilter=LATEX.DLL}
%TCIDATA{Version=5.00.0.2552}
%TCIDATA{<META NAME="SaveForMode" CONTENT="1">}
%TCIDATA{Created=Monday, June 18, 2012 09:54:29}
%TCIDATA{LastRevised=Wednesday, October 17, 2012 20:45:39}
%TCIDATA{<META NAME="GraphicsSave" CONTENT="32">}
%TCIDATA{<META NAME="DocumentShell" CONTENT="Standard LaTeX\Blank - Standard LaTeX Article">}
%TCIDATA{Language=American English}
%TCIDATA{CSTFile=40 LaTeX article.cst}

\newtheorem{theorem}{Theorem}

\newtheorem{lemma}[theorem]{Lemma}

\newtheorem{proposition}[theorem]{Proposition}

\newenvironment{proof}[1][Proof]{\noindent\textbf{#1.} }{\ \rule{0.5em}{0.5em}}
\input{tcilatex}

\begin{document}

\title{Biharmonic maps from Finsler spaces}
\author{Nicoleta VOICU \\
%EndAName
"Transilvania"\ University\\
50, Iuliu Maniu str., Brasov, Romania\\
e-mail: nico.brinzei@unitbv.ro}
\date{}
\maketitle

\begin{abstract}
The notions of bienergy of a smooth mapping and of biharmonic map between
Riemannian manifolds are extended to the case when the domain is Finslerian.
We determine the first and the second variation of the bienergy functional,
the equations of Finsler-to-Riemann biharmonic maps and some specific
examples. We prove that two notable results in Riemannian geometry
concerning the inexistence of nonharmonic biharmonic maps still hold true
this case.
\end{abstract}

\textbf{MSC2010: }53B40, 53C60, 58E20, 31B30

\textbf{Keywords: }Finsler space, bienergy, biharmonic map, harmonic map

\section{Introduction}

Biharmonic mappings, as a generalization of harmonic ones, are among the
most important mappings in physics; initially appearing from problems of
elasticity theory and fluid mechanics, \cite{Selvadurai}, in the latter
decades, they proved to be useful also in computer graphics, geometry
processing, \cite{Lipman} and radar imaging, \cite{Andersson}. Mathematical
arguments, \cite{Montaldo}, for the use of biharmonic maps include the fact
that harmonic maps do not always exist - and biharmonic maps can "succeed
where harmonic maps have failed"- together with stability issues. On the
other side, Finslerian models seem to gain more and more ground wherever
anisotropy of some kind is involved (and not only), from domains such as:
kinematics, elasticity theory, \cite{Bucataru}, seismic ray theory, \cite%
{Anto-Slawinski}, \cite{Yajima}, \cite{Yajima1}, gravity theories, \cite{Li}%
, \cite{Munteanu}, \cite{Vacaru}, geometrical optics, \cite{Anto},
thermodynamics, statistical mechanics, \cite{Anto}, \cite{Ootsuka}, and up
to biology, \cite{Anto}, \cite{Anto-biology}.

While in Riemannian geometry, biharmonic mappings have been quite
intensively studied (see, for instance, \cite{Balmus}, \cite{Jiang}, \cite%
{Montaldo}, \cite{Oniciuc}, \cite{Oniciuc2}), to our knowledge, in Finsler
geometry, only harmonic maps have been considered, \cite{Mo}, \cite{Mo2}, 
\cite{Mo-book}, \cite{Mosel}, \cite{Nishikawa}, \cite{Shen-Zhang}. Still,
the rich potential of Finslerian geometric models makes us think that such a
study is at least necessary.

As a first step in this direction, we study in this paper biharmonic
mappings having as domain real Finsler spaces $(M,g)$ and as codomain,
Riemannian ones $(\tilde{M},\tilde{g})$ or, briefly, Finsler-to-Riemann
mappings. Our study continues the one made by Xiaohuan Mo and collaborators (%
\cite{Mo}, \cite{Mo2}, \cite{Mo-book}) for Finsler-to-Riemann harmonic
mappings.

First of all, we extend the concept of bienergy functional for
Finsler-to-Riemann mappings and determine its Euler-Lagrange equations,
i.e., the equations of Finsler-to-Riemann biharmonic maps. This process
points out a generalization of the rough Laplacian from Riemannian geometry.

Any Finsler-to Riemann harmonic map is biharmonic. Just as in Riemannian
geometry, there exist several cases in which the converse is also true; two
notable results in Riemannian geometry, due to Guoying Jiang, \cite{Jiang},
and C. Oniciuc, \cite{Oniciuc}, respectively, can be generalized without
difficulty to our situation:

1) Any biharmonic mapping whose domain is compact and boundaryless and whose
codomain has nonpositive sectional curvature, is harmonic.

2)\ Any biharmonic mapping whose codomain has strictly negative sectional
curvature, obeying the conditions: a) the norm of its tension is constant
and b)\ its rank is greater or equal to 2 at least at one point of its
domain, is harmonic.

Further, we study the biharmonicity of the identity map $id:(M,g)\rightarrow
(M,\tilde{g})$ in two cases of Finsler-to-Riemann transformations of metrics 
$g\mapsto \tilde{g},$ thus pointing out examples of nonharmonic biharmonic
maps. The second case, that of linearized perturbations, is inspired from
general relativity; even though we only consider here positive definite
metrics, in our opinion, it is illustrative.

In the last section, we determine the second variation of the bienergy
functional. Except for the facts that each of the expressions of the tension
and of the rough Laplacian gains an extra term and of the use of nonlinear
connections on $TM$, the first and second variation of the bienergy remain
formally similar to their Riemannian counterparts.

In the study of a Finsler space $(M,g)$, there are two major - and
equivalent - approaches: the one based on the tangent bundle $(TM,\pi ,M)$,
via horizontal lifts, and the one based on the pullback bundle $\pi ^{\ast
}TM.$ As noticed in \cite{Nishikawa}, the study of harmonic maps between
real Finsler manifolds is usually carried out on $\pi ^{\ast }TM$ (as in 
\cite{Mo}, \cite{Shen-Zhang}) while in the case of complex Finsler
manifolds, it relies on the geometry of $TM.$ In order to obtain a more
unified method, we preferred to work, also in the real case, on $TM$; the
geometric structures we used are the $TM$-correspondents of those in \cite%
{Mo}, \cite{Mo2}, \cite{Mo-book}.

\section{Biharmonic maps in Riemannian geometry}

In this section, we present in brief some results in \cite{Montaldo}, \cite%
{Jiang}, \cite{Balmus}.

Let $(M,g)$ and $(\tilde{M},\tilde{g})$ be two $\mathcal{C}^{\infty }$%
-smooth, connected Riemannian manifolds without boundary, of dimensions $n$
and $\tilde{n}$; unless elsewhere specified, we will assume, as in \cite%
{Jiang}, that $M$ is compact and orientable. On the two manifolds, we denote
the local coordinates by $(x^{i})_{i=\overline{1,n}},$ $(\tilde{x}^{\alpha
})_{\alpha =\overline{1,\tilde{n}}},$ the Levi-Civita connections by $\nabla
,$ $\tilde{\nabla}$ (with coefficients $\Gamma _{~jk}^{i},$ $\tilde{\Gamma}%
_{~\beta \gamma }^{\alpha }$) and by $\Gamma (E),$ $\Gamma (\tilde{E}),$ the
modules of $\mathcal{C}^{\infty }$-smooth sections of any vector bundles $E,%
\tilde{E}$ over $M$ and $\tilde{M}.$ Commas $_{,i}$ and $_{,\alpha }$ will
mean partial differentiation with respect to $x^{i}$ and $\tilde{x}^{\alpha
} $ and $\partial _{i},$ $\tilde{\partial}_{\alpha },$ the natural bases of
the modules $\Gamma (TM)$ and $\Gamma (T\tilde{M})$ respectively.

A $\mathcal{C}^{\infty }$-smooth mapping $\phi :M\rightarrow \tilde{M}$ is
called \textit{harmonic, }if it is a critical point of the \textit{energy
functional} 
\begin{equation}
E:\mathcal{C}^{\infty }(M,\tilde{M})\rightarrow \mathbb{R},~~E(\phi )=\dfrac{%
1}{2}\underset{M}{\int }\left\Vert d\phi \right\Vert ^{2}d\mathcal{V}_{g},
\label{energy_r}
\end{equation}%
where $d\phi $ is regarded as a section of the bundle $T^{\ast }M\otimes
\phi ^{-1}T\tilde{M},$ $\left\Vert d\phi \right\Vert ^{2}=trace_{g}(\phi
^{\ast }\tilde{g})=g^{ij}\tilde{g}_{\alpha \beta }\phi _{~,i}^{\alpha }\phi
_{~,j}^{\beta }$ is the squared Hilbert-Schmidt norm of $d\phi $ and $d%
\mathcal{V}_{g}$ is the Riemannian volume element on $M.$

Harmonic maps are solutions of the equation $\tau (\phi )=0,$ where, \cite%
{Montaldo},%
\begin{equation}
\tau (\phi )=g^{ij}\{\nabla _{\partial _{i}}^{\phi }d\phi (\partial
_{j})-d\phi (\nabla _{\partial _{i}}\partial _{j})\}=:g^{ij}(\nabla
_{\partial _{i}}^{\phi }d\phi )\partial _{j},  \label{tension_r_covar}
\end{equation}%
is a section of the bundle $\phi ^{-1}T\tilde{M},$ called the \textit{%
tension }of $\phi $ and $\nabla ^{\phi }$ is the connection induced by $%
\tilde{\nabla}$ in the pullback bundle $\phi ^{-1}T\tilde{M},$ \cite{Balmus}%
. In local writing:%
\begin{equation*}
\tau ^{\alpha }(\phi )=g^{ij}\{\phi _{~,ij}^{\alpha }+\tilde{\Gamma}_{~\beta
\gamma }^{\alpha }\phi _{~,i}^{\beta }\phi _{~,j}^{\gamma }-\Gamma
_{~ij}^{k}\phi _{~,k}^{\alpha }\}.
\end{equation*}

The above notion of harmonicity generalizes the usual one for mappings
between Euclidean spaces; notable examples\ include geodesic curves and
minimal Riemannian immersions.

\bigskip

\textit{Biharmonic maps} $\phi \in \mathcal{C}^{\infty }(M,\tilde{M})$ are
defined as critical points of the \textit{bienergy functional:}%
\begin{equation}
E_{2}(\phi )=\dfrac{1}{2}\underset{M}{\int }\left\langle \tau (\phi ),\tau
(\phi )\right\rangle d\mathcal{V}_{g};  \label{bienergy_r}
\end{equation}%
here $\left\langle ~~,~~\right\rangle $ denotes the scalar product on the
fibers of $T\tilde{M},$ determined by $\tilde{g}.$ The Euler-Lagrange
equation attached to the bienergy is, \cite{Montaldo}:%
\begin{equation}
\tau _{2}(\phi )=0,  \label{biharmonic_eqn_R}
\end{equation}%
where the \textit{bitension }$\tau _{2}(\phi )$ of $\phi $ is the section of
the bundle $\phi ^{-1}T\tilde{M}$ given by: 
\begin{equation}
\tau _{2}(\phi )=-\Delta ^{\phi }\tau (\phi )-trace_{g}(R^{\tilde{\nabla}%
}(d\phi ,\tau (\phi ))d\phi )  \label{bitension_r}
\end{equation}%
and the operator $\Delta ^{\phi }=-g^{ij}(\nabla _{\partial _{i}}^{\phi
}\nabla _{\partial _{j}}^{\phi }-\nabla _{\nabla _{\partial _{i}}\partial
_{j}}^{\phi })$ is the rough Laplacian, acting on sections of $\phi ^{-1}T%
\tilde{M}$.

\textbf{Remarks: }1) Equation (\ref{biharmonic_eqn_R}) is the Riemannian
generalization of the biharmonic equation in Euclidean spaces, \cite%
{Selvadurai}.

2)\ Any harmonic map $\phi :M\rightarrow \tilde{M}$ is biharmonic.

\section{Finsler structures}

In the following, except for the metric structure on $M$ (and related
quantities) we preserve the notations and conventions in Section 2. We
denote by $TM$ and $T\tilde{M}$ the tangent bundles of the manifolds $M$ and 
$\tilde{M}$ and their local coordinates, by $(x,y):=(x^{i},y^{i}),$ $(\tilde{%
x},\tilde{y}):=(\tilde{x}^{\alpha },\tilde{y}^{\alpha });$ dots $_{\cdot i}$
and $_{\cdot \alpha }$ will mean partial differentiation with respect to $%
y^{i}$ and $\tilde{y}^{\alpha }.$

\bigskip

\textbf{A. Metric structure: }A \textit{Finsler structure}, \cite{Shen}, 
\cite{Mo-book}, on the manifold $M$ is a function $F:TM\rightarrow \mathbb{R}
$ with the properties:

1) $F(x,y)\ $is $\mathcal{C}^{\infty }$-smooth for $y\not=0$ and continuous
at $y=0.$

2)$\ F(x,\lambda y)=\lambda F(x,y),$ $\forall \lambda >0$;

3)\ The \textit{Finslerian metric tensor}:%
\begin{equation}
g_{ij}(x,y):=\dfrac{1}{2}(F^{2}(x,y))_{\cdot ij}  \label{metric_tensor_F}
\end{equation}%
is positive definite.

The arc length of a curve $c$ on the Finsler space $(M,g)$ is given, \cite%
{Shen}, by:%
\begin{equation}
l(c)=~\underset{c}{\int }F(x,dx).  \label{arc_length}
\end{equation}%
Condition 2) above insures the independence of $l(c)$ of the chosen
parametrization of $c.$

\bigskip

\textbf{B. Nonlinear connection and adapted frame on }$TM:$ Ehresmann (or 
\textit{nonlinear, }\cite{Shen}) connections $TTM=HTM\oplus VTM,$ with $%
VTM=Span(\dfrac{\partial }{\partial y^{i}})$ help simplify computations in
Finsler geometry and obtain geometric objects with simple transformation
rules. A typical choice is the \textit{Cartan nonlinear connection}, built
as follows, \cite{Shen}.

Geodesics of the Finsler space $(M,g)$ are defined as critical points $c$ of
the arc length (\ref{arc_length}); in the natural parametrization, their
equations are:%
\begin{equation}
\dfrac{dy^{i}}{ds}+2G^{i}(x,y)=0,~\ y=\dot{x};  \label{spray_coeff}
\end{equation}%
with $2G^{i}(x,y)=\dfrac{1}{2}g^{ih}\left( (F^{2})_{\cdot
h,k}y^{k}-(F^{2})_{,h}\right) ;$ this defines the local coefficients $%
G_{~j}^{i}=G_{~j}^{i}(x,y)$ of the Cartan nonlinear connection as: 
\begin{equation}
G_{~j}^{i}:=G_{~\cdot j}^{i}.  \label{Cartan_conn}
\end{equation}%
The Cartan nonlinear connection gives rise to the adapted basis: 
\begin{equation}
(\delta _{i}=\dfrac{\partial }{\partial x^{i}}-G_{~i}^{j}(x,y)\dfrac{%
\partial }{\partial y^{j}},~~\ \ \dot{\partial}_{i}=\dfrac{\partial }{%
\partial y^{i}})  \label{general_adapted_basis}
\end{equation}%
on $\Gamma (TTM)$ and to its dual $(dx^{i},~\ \delta
y^{i}=dy^{i}+G_{~j}^{i}dx^{j}).$

With respect to coordinate transformations on $TM,$ induced by coordinate
transformations $x^{i^{\prime }}=x^{i^{\prime }}(x)$ on $M,$ the elements of
the adapted basis/cobasis transform by the same rules as vector/covector
fields on $M$, \cite{Lagrange}.

Any vector field $X$ on $TM$ can be decomposed as: $X=hX+vX,$ $\
hX=X^{i}\delta _{i},$ $vX=\hat{X}^{i}\dot{\partial}_{i};$ its \textit{%
horizontal component} $hX$ and its \textit{vertical component} $vX$ are
vector fields on $TM$. This leads to a simple rule of transformation for $%
X^{i},\hat{X}^{i}.$ A similar situation holds for 1-forms $\omega =h\omega
+v\omega ,$ $h\omega =\omega _{i}dx^{i},$ $v\omega =\hat{\omega}_{i}\delta
y^{i}$, \cite{Lagrange}) and more generally, for tensors of any rank on $TM$.

Using Cartan nonlinear connection, tangent vector fields to lifts $c^{\prime
}:=(c,\dot{c})$ to $TM$ of unit speed geodesics of $M$ are always
horizontal, \cite{Shen}, \cite{Lagrange}.

\bigskip

The adapted basis $\{\delta _{i},\dot{\partial}_{i}\}$ is generally
nonholonomic, the Lie brackets of its elements are:%
\begin{equation*}
\lbrack \delta _{j},\delta _{k}]=R_{~jk}^{i}(x,y)\dot{\partial}%
_{i},~~~~[\delta _{j},\dot{\partial}_{k}]=G_{~jk}^{i}(x,y)\dot{\partial}%
_{i},~\ \ [\dot{\partial}_{j},\dot{\partial}_{k}]=0;
\end{equation*}%
where: 
\begin{equation}
R_{~jk}^{i}(x,y)=\delta _{k}G_{~j}^{i}-\delta _{j}G_{~k}^{i},~\ \ \
G_{~jk}^{i}(x,y):=G_{~\cdot j\cdot k}^{i}(x,y).  \label{Lie_bracket_comps}
\end{equation}

\bigskip

\textbf{C. }As \textbf{covariant differentiation }rule on $TM,$ we will use
the one given by the \textit{Chern-Rund affine connection} $D$ on $TM,$ \cite%
{Bucataru}, locally described by:%
\begin{equation}
D_{\delta _{k}}\delta _{j}=~\Gamma _{~jk}^{i}\delta _{i},~\ D_{\delta _{k}}%
\dot{\partial}_{j}=\Gamma _{~jk}^{i}\dot{\partial}_{i},~\ D_{\dot{\partial}%
_{k}}\delta _{j}=~D_{\dot{\partial}_{k}}\dot{\partial}_{j}=0,
\label{chern_conn}
\end{equation}%
where~\ $\Gamma _{~jk}^{i}=\dfrac{1}{2}g^{ih}(\delta _{k}g_{hj}+\delta
_{j}g_{hk}-\delta _{h}g_{jk})$ are the "adapted" Christoffel symbols%
\footnote{%
In the Finslerian case, the usual Christoffel symbols $\gamma _{~jk}^{i}=%
\dfrac{1}{2}g^{ih}(g_{hj,k}+g_{hk,j}-g_{jk,h})$ do \textit{not }generally
represent the coefficients of an affine connection on $TM.$} of $g.$ The
Chern-Rund connection preserves by parallelism the horizontal and vertical
distributions on $TTM$, i.e.,%
\begin{equation*}
D_{X}(hY)=h~D_{X}Y,~~~D_{X}(vY)=v~D_{X}Y;
\end{equation*}%
it is generally, only $h$-metrical: 
\begin{equation*}
D_{hX}g=0,~\forall X\in \Gamma (TTM).
\end{equation*}

The Chern-Rund connection $D$ has nontrivial torsion:%
\begin{equation}
T=R_{~jk}^{i}\dot{\partial}_{i}\otimes dx^{k}\otimes dx^{j}+P_{~jk}^{i}\dot{%
\partial}_{i}\otimes \delta y^{k}\otimes dx^{j},  \label{torsion}
\end{equation}%
with $R_{~jk}^{i}$ as in (\ref{Lie_bracket_comps})\ and $%
P_{~jk}^{i}=G_{~jk}^{i}-\Gamma _{~jk}^{i};$ the latter defines a horizontal
1-form:%
\begin{equation}
P=P_{i}dx^{i},~\ P_{i}:=P_{~ij}^{j},  \label{torsion_comps}
\end{equation}%
which will be used in the following. We notice that the torsion of $D$ has
only vertical components:%
\begin{equation}
hT(X,Y)=0,~\ \forall X,Y\in \Gamma (TTM).  \label{torsion_property}
\end{equation}

The curvature $R$ of $D$ is locally described by:%
\begin{eqnarray}
R &=&R_{j~kl}^{~i}dx^{l}\otimes dx^{k}\otimes dx^{j}\otimes \delta
_{i}+R_{j~kl}^{~i}dx^{l}\otimes dx^{k}\otimes \delta y^{j}\otimes \dot{%
\partial}_{i}+;  \label{curvature_comps} \\
&&+P_{j~kl}^{~i}\delta y^{l}\otimes dx^{k}\otimes dx^{j}\otimes \delta
_{i}+P_{j~kl}^{~i}\delta y^{l}\otimes dx^{k}\otimes \delta y^{j}\otimes \dot{%
\partial}_{i},  \notag
\end{eqnarray}%
where $R_{j~kl}^{~i}=\delta _{l}\Gamma _{~jk}^{i}-\delta _{k}\Gamma
_{~jl}^{i}+\Gamma _{~jk}^{h}\Gamma _{~hl}^{i}-\Gamma _{~jl}^{h}\Gamma
_{~hk}^{i}$ and$\ P_{j~kl}^{~i}=\Gamma _{~jk\cdot l}^{i}.$

\bigskip

We consider the above notions also for the Riemannian manifold $(\tilde{M},%
\tilde{g})$ and designate them by tildes. In this case: $\tilde{\Gamma}%
_{~\beta \gamma }^{\alpha }=\tilde{G}_{~\beta \gamma }^{\alpha }=\tilde{%
\gamma}_{~\beta \gamma }^{\alpha }$, that is: 
\begin{equation}
(\tilde{\nabla}_{X}Y)^{\tilde{h}}=\tilde{D}_{X^{\tilde{h}}}Y^{\tilde{h}},~\
\forall X,Y\in \Gamma (T\tilde{M}),  \label{Levi-Civita_lift}
\end{equation}%
where the superscript $^{\tilde{h}}$ indicates the horizontal of vector
fields from $\tilde{M}$ to $T\tilde{M};$\ also, $\tilde{G}_{~\beta }^{\alpha
}(\tilde{x},\tilde{y})=\tilde{\gamma}_{~\beta \gamma }^{\alpha }(\tilde{x})%
\tilde{y}^{\gamma },$ $\tilde{P}_{~\beta \gamma }^{\alpha }=0;$ the
Chern-Rund connection becomes "fully"\ metrical:%
\begin{equation}
\tilde{D}_{X}\tilde{g}=0,~\forall X\in \Gamma (TT\tilde{M}).
\label{full_metricity}
\end{equation}%
The only nonzero local component of curvature tensor $\tilde{R}$ is $\tilde{R%
}_{\beta ~\gamma \delta }^{~\alpha },$ i.e.:%
\begin{equation}
\tilde{R}(X,Y)Z=\tilde{R}(\tilde{h}X,\tilde{h}Y)Z,~\ \ \ \ \ \forall
X,Y,Z\in \Gamma (TT\tilde{M});  \label{Riem_R}
\end{equation}%
$\tilde{R}_{\beta ~\gamma \delta }^{~\alpha }$ coincide with the components
of the curvature $R^{\tilde{\nabla}}$ of the Levi-Civita connection of $%
\tilde{g}$ and are thus subject to the same symmetries; Bianchi identities, 
\cite{Shen}, also acquire the same form as those of $R^{\tilde{\nabla}}.$
Ricci identities of $\tilde{D}$ take the local form:%
\begin{equation}
\begin{array}{l}
\tilde{D}_{\delta _{\rho }}\tilde{D}_{\delta _{\gamma }}Z^{\alpha }-\tilde{D}%
_{\delta _{\gamma }}\tilde{D}_{\delta _{\rho }}Z^{\alpha }=\tilde{R}_{\beta
~\gamma \rho }^{~\alpha }Z^{\beta }+\tilde{R}_{~\gamma \rho }^{\beta
}Z_{~\cdot \beta }^{\alpha }; \\ 
\ \tilde{D}_{\dot{\partial}_{\rho }}\tilde{D}_{\delta _{\gamma }}Z^{\alpha }-%
\tilde{D}_{\delta _{\gamma }}\tilde{D}_{\dot{\partial}_{\rho }}Z^{\alpha
}=0,~\ \ ~\ \ \ \ \forall Z=Z^{\alpha }\delta _{\alpha }\in \Gamma (HT\tilde{%
M}).\ 
\end{array}
\label{Ricci_R}
\end{equation}

Another useful property of $\tilde{D}$ is:%
\begin{equation}
\tilde{D}_{\tilde{\delta}_{\beta }}\tilde{y}^{\alpha }=0.  \label{deflection}
\end{equation}

The Riemannian metric $\tilde{g}$ gives rise to a scalar product on the
fibers on $HT\tilde{M},$ which will be denoted by $\left\langle
~~,~~\right\rangle :$%
\begin{equation}
\left\langle X,Y\right\rangle =\tilde{g}_{\alpha \beta }X^{\alpha }Y^{\beta
},~\ \ ~\ \forall X=X^{\alpha }\tilde{\delta}_{\alpha },Y=Y^{\alpha }\tilde{%
\delta}_{\alpha }\in \Gamma (HT\tilde{M}).  \label{scalar_product}
\end{equation}

\bigskip

\textbf{D. Volume form and integration domain. }Consider the Riemannian
volume element $d\mathcal{V}_{g}=\sqrt{\det G}dx\wedge \delta y=\det
gdx\wedge \delta y$ on $TM,$ determined by the \textit{Sasaki lift, }\cite%
{Shen}, $G:=g_{ij}dx^{i}\otimes dx^{j}+g_{ij}\delta y^{i}\otimes \delta
y^{j} $ of $g$ to $TM$. For an $x,y$-dependent function $\alpha
:TM\rightarrow \mathbb{R},$ we will consider its integral on the total space
of the unit ball bundle of $M$ (which is compact, since $g$ is positive
definite), divided by the volume $Vol\mathbb{B}^{n}$ of the unit ball in the
Euclidean space $\mathbb{R}^{n}$, i.e.,%
\begin{equation}
\underset{BM}{\int }\alpha (x,y)d\mathcal{V}_{g}=\dfrac{1}{Vol\mathbb{B}^{n}}%
\underset{M}{\int }(\underset{B_{x}}{\int }\alpha (x,y)\det g(x,y)dy)dx,
\end{equation}%
where $B_{x}=\left\{ y\in T_{x}M~|~g_{ij}y^{i}y^{j}\leq 1\right\} $ (on $M,$
this construction provides a generalization of the Riemannian volume
element, called the \textit{Holmes-Thompson volume element, }\cite{Shen1}).

The divergence $divX=\dfrac{1}{\det g}\delta _{i}(X^{i}\det
g)-G_{~ij}^{j}X^{i},$ of a horizontal vector field $X=X^{i}\delta _{i}$ on $%
TM,$ \cite{Zhong}, can be expressed in terms of Chern-Rund covariant
derivatives as:%
\begin{equation*}
divX=~D_{\delta _{i}}X^{i}-P(X);
\end{equation*}%
we will also use this relation in the form:%
\begin{equation}
g^{ij}\delta _{i}X_{j}=divX+g^{ij}\Gamma _{~ij}^{k}X_{k}+P_{i}X^{i}.
\label{divergence}
\end{equation}

\section{Some remarks on Finsler-to-Riemann maps}

Let $\phi :M\rightarrow \tilde{M},$ $(x^{i})\mapsto (\phi ^{\alpha }(x^{i}))$
be $\mathcal{C}^{\infty }$-smooth. Between the tangent bundles $TM$ and $T%
\tilde{M}$, it acts the differential $\Phi :=d\phi $ (regarded as a mapping
between manifolds); throughout this section, we will use alternatively the
two notations $\Phi $ and $d\phi .$ The connection $\tilde{D}$ determines a
connection $D^{d\phi }$ in the pullback bundle $d\phi ^{-1}(TT\tilde{M})$: 
\begin{equation}
D_{X}^{d\phi }(\Phi ^{\ast }Y):=\tilde{D}_{\Phi _{\ast }X}Y,~~~~\forall X\in
\Gamma (TTM),~Y\in \Gamma (TT\tilde{M});  \label{pullback_conn_TM}
\end{equation}%
with $(\Phi ^{\ast }Y)_{(x,y)}=Y_{\Phi (x,y)}$; hence,%
\begin{equation}
D_{X}^{d\phi }(\tilde{h}Z)=\tilde{h}D_{X}^{d\phi }Z,~\ \ D_{X}^{d\phi }(%
\tilde{v}Z)=\tilde{v}D_{X}^{d\phi }Z,~\ \ \ \forall Z\in \Gamma (d\phi
^{-1}(TT\tilde{M})).  \label{d-connection}
\end{equation}

The mapping $\Phi $ is locally described by: 
\begin{equation*}
\Phi :~\ \ \tilde{x}^{\alpha }=\phi ^{\alpha }(x),~\ \tilde{y}^{\alpha
}=\phi _{~,j}^{\alpha }(x)y^{j}.
\end{equation*}%
With $\phi ^{\alpha ^{\prime }}(x,y):=\phi _{~,j}^{\alpha }(x)y^{j},$ we
have, for $X\in \Gamma (HTM):$ $\Phi _{\ast }X=X(\phi ^{\alpha })\partial
_{\alpha }+X(\phi ^{\alpha ^{\prime }})\dot{\partial}_{\alpha }.$ Taking $%
X=X^{i}\delta _{i}$ and expressing $\Phi _{\ast }X$ in the adapted bases,%
\begin{equation}
\Phi _{\ast }X=X^{i}\phi _{~,i}^{\alpha }\tilde{\delta}_{\alpha }+{\large (}%
X^{i}\delta _{i}\phi ^{\alpha ^{\prime }}+\tilde{N}_{~\beta }^{\alpha
}X(\phi ^{\beta }){\large )}\dot{\partial}_{\alpha }.  \label{image_Phi}
\end{equation}%
The horizontal component $\tilde{h}\Phi _{\ast }X$ will have a peculiar
importance.

\begin{lemma}
For any horizontal vector field $X=X^{i}\delta _{i}$ on $TM:$%
\begin{equation}
\tilde{h}\Phi _{\ast }X=X^{i}\phi _{~,i}^{\alpha }\delta _{\alpha }=:d\phi ^{%
\tilde{h}}(X),  \label{notation_horiz_comp_differential}
\end{equation}%
where $d\phi ^{\tilde{h}}:=\phi _{\,,i}^{\alpha }dx^{i}\otimes \delta
_{\alpha }$ is the horizontal lift of the vector-valued 1-form $d\phi $.
\end{lemma}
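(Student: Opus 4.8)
The plan is to obtain the horizontal component of $\Phi_\ast X$ directly from the decomposition (\ref{image_Phi}), which already expresses $\Phi_\ast X$ in the adapted frame $\{\tilde{\delta}_\alpha,\dot{\partial}_\alpha\}$ on $TT\tilde{M}$. By construction of the adapted basis (the analogue of (\ref{general_adapted_basis}) on $T\tilde{M}$), the vectors $\tilde{\delta}_\alpha$ span the horizontal distribution $HT\tilde{M}$ while the $\dot{\partial}_\alpha$ span the vertical one. Hence the horizontal projector $\tilde{h}$ annihilates every $\dot{\partial}_\alpha$-term and fixes every $\tilde{\delta}_\alpha$-term; applying it to (\ref{image_Phi}) leaves precisely $\tilde{h}\Phi_\ast X = X^{i}\phi^{\alpha}_{,i}\tilde{\delta}_\alpha$, which is the asserted formula.

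To make the argument self-contained I would re-derive the $\tilde{\delta}_\alpha$-coefficient rather than merely quote it. Starting from $\Phi_\ast X = X(\phi^{\alpha})\partial_\alpha + X(\phi^{\alpha^{\prime}})\dot{\partial}_\alpha$ and substituting $\partial_\alpha = \tilde{\delta}_\alpha + \tilde{N}^{\beta}_{~\alpha}\dot{\partial}_\beta$, the horizontal part of $\Phi_\ast X$ is exactly $X(\phi^{\alpha})\tilde{\delta}_\alpha$, since the nonlinear-connection coefficients $\tilde{N}^{\beta}_{~\alpha}$ feed only into the coefficient of $\dot{\partial}_\beta$. It then remains to observe that, because $\phi^{\alpha}=\phi^{\alpha}(x)$ is independent of the fibre coordinate $y$, the $\dot{\partial}_j$-summand in $\delta_i = \partial_i - G^{j}_{~i}\dot{\partial}_j$ kills $\phi^{\alpha}$, so that $X(\phi^{\alpha}) = X^{i}\delta_i\phi^{\alpha} = X^{i}\phi^{\alpha}_{,i}$. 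This yields the claimed coefficient.

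Finally I would verify the identification with the horizontal lift $d\phi^{\tilde{h}} = \phi^{\alpha}_{,i}\,dx^{i}\otimes\tilde{\delta}_\alpha$: since $\{dx^{i}\}$ is dual to $\{\delta_i\}$ in the adapted cobasis on $TM$, one has $dx^{i}(X)=X^{i}$ for $X=X^{j}\delta_j$, whence $d\phi^{\tilde{h}}(X)=\phi^{\alpha}_{,i}X^{i}\tilde{\delta}_\alpha$, matching the expression found above. I do not expect a genuine obstacle here: the computation is essentially a projection. The only point requiring care is the bookkeeping between the two adapted frames (on $TM$ and on $TT\tilde{M}$) and keeping track of the $\tilde{N}^{\alpha}_{~\beta}$ terms, so as to confirm that they are confined to the vertical component and leave the horizontal coefficient clean.
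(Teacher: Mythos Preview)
Your proposal is correct and follows exactly the route the paper takes: the lemma is an immediate reading-off of the $\tilde{\delta}_\alpha$-component from the adapted-frame decomposition (\ref{image_Phi}), and indeed the paper states the lemma without any separate proof. Your additional self-contained derivation of the coefficient $X(\phi^{\alpha})=X^{i}\phi^{\alpha}_{,i}$ and the check against the definition of $d\phi^{\tilde{h}}$ simply spell out what the paper leaves implicit.
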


Consider a 1-parameter variation $f:I_{\varepsilon }\times
M,~~f=f(\varepsilon ,x),$ $f(0,x)=\phi (x)$ of $\phi $ and:%
\begin{equation*}
F:=df:T(I_{\varepsilon }\times M)\rightarrow T\tilde{M}.
\end{equation*}

On $T(I_{\varepsilon }\times M),$ the local coordinates are $(\varepsilon
,x,\varepsilon ^{\prime },y).$ Taking on the interval $I_{\varepsilon
}\subset \mathbb{R},$ the Euclidean metric and the product Finsler metric on 
$I_{\varepsilon }\times M,$ we will obtain a trivial prolongation of the
Cartan nonlinear connection to this new manifold, which produces the adapted
basis $\{\partial _{\varepsilon },\delta _{i},\partial _{\varepsilon
^{\prime }},\dot{\partial}_{i}\}$ and a trivial prolongation of the Chern
connection $D$ (which we will denote again by $D$). i.e., $D_{\partial
_{\varepsilon }}\delta _{i}=D_{\partial _{\varepsilon }}\dot{\partial}%
_{i}=0,~\ D_{\delta _{i}}\partial _{\varepsilon }=D_{\dot{\partial}%
_{i}}\partial _{\varepsilon }=0$ etc. We also notice that $[\partial
_{\varepsilon },\delta _{i}]=0$ and $[\partial _{\varepsilon },\dot{\partial}%
_{i}]=0.$

The connection $D^{df}$ will be prolonged to $df^{-1}(TT\tilde{M}),$ by: 
\begin{equation}
D_{\partial _{\varepsilon }}^{df}(F^{\ast }X):=~\tilde{D}_{F_{\ast }\partial
_{\varepsilon }}X,~\ \ X\in \Gamma (TT\tilde{M}).  \label{pullback_conn}
\end{equation}

\begin{lemma}
For any $X,Y\in \Gamma (TTM)$, $Z\in \Gamma (df^{-1}(TT\tilde{M})),$ $%
p=(x,y)\in TM$: 
\begin{equation}
D_{\partial _{\varepsilon }}^{df}(df^{\tilde{h}}(X))=D_{X}^{df}(df^{\tilde{h}%
}(\partial _{\varepsilon })).  \label{commutation_rule_2}
\end{equation}
\end{lemma}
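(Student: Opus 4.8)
The plan is to read \eqref{commutation_rule_2} as the Finslerian counterpart of the classical symmetry of the Hessian $\nabla^{\phi}d\phi$, and to establish it by a direct computation in the adapted frame, evaluating each covariant derivative from the definition \eqref{pullback_conn} of $D^{df}$ together with the Chern--Rund coefficients \eqref{chern_conn}. First I would record the two sections explicitly: since $\partial_{\varepsilon}$ and $X=X^{i}\delta_{i}$ are horizontal on $T(I_{\varepsilon}\times M)$, the prolonged version of \eqref{notation_horiz_comp_differential} gives
\[
df^{\tilde{h}}(X)=\big(X^{i}f^{\alpha}_{,i}\big)\,F^{\ast}\tilde{\delta}_{\alpha},\qquad df^{\tilde{h}}(\partial_{\varepsilon})=f^{\alpha}_{,\varepsilon}\,F^{\ast}\tilde{\delta}_{\alpha},
\]
regarded as sections of $df^{-1}(HT\tilde{M})$, where $f^{\alpha}_{,i}=\partial f^{\alpha}/\partial x^{i}$ and $f^{\alpha}_{,\varepsilon}=\partial f^{\alpha}/\partial\varepsilon$ depend only on $(\varepsilon,x)$.

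Next I would expand both members of \eqref{commutation_rule_2} by the Leibniz rule and \eqref{pullback_conn}. For the left member,
\[
D^{df}_{\partial_{\varepsilon}}\!\big(df^{\tilde{h}}(X)\big)=X^{i}f^{\alpha}_{,i\varepsilon}\,\tilde{\delta}_{\alpha}+X^{i}f^{\beta}_{,i}\,\tilde{D}_{F_{\ast}\partial_{\varepsilon}}\tilde{\delta}_{\beta},
\]
while for the right member
\[
D^{df}_{X}\!\big(df^{\tilde{h}}(\partial_{\varepsilon})\big)=X^{i}f^{\alpha}_{,\varepsilon i}\,\tilde{\delta}_{\alpha}+f^{\beta}_{,\varepsilon}\,\tilde{D}_{F_{\ast}X}\tilde{\delta}_{\beta}.
\]
In each case only the horizontal part of the pushforward contributes, since $\tilde{D}_{\dot{\partial}_{\gamma}}\tilde{\delta}_{\beta}=0$ by \eqref{chern_conn}; writing $F_{\ast}\partial_{\varepsilon}$ and $F_{\ast}X$ in the adapted basis as in \eqref{image_Phi}, this yields $\tilde{D}_{F_{\ast}\partial_{\varepsilon}}\tilde{\delta}_{\beta}=f^{\gamma}_{,\varepsilon}\tilde{\Gamma}^{\alpha}_{~\beta\gamma}\tilde{\delta}_{\alpha}$ and $\tilde{D}_{F_{\ast}X}\tilde{\delta}_{\beta}=X^{i}f^{\gamma}_{,i}\tilde{\Gamma}^{\alpha}_{~\beta\gamma}\tilde{\delta}_{\alpha}$.

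The two expressions then coincide for exactly two reasons: the mixed partials commute, $f^{\alpha}_{,i\varepsilon}=f^{\alpha}_{,\varepsilon i}$, and the horizontal Chern--Rund coefficients of the Riemannian target are symmetric, $\tilde{\Gamma}^{\alpha}_{~\beta\gamma}=\tilde{\Gamma}^{\alpha}_{~\gamma\beta}$, which lets one relabel the summation indices carrying the quadratic terms $X^{i}f^{\beta}_{,i}f^{\gamma}_{,\varepsilon}\tilde{\Gamma}^{\alpha}_{~\beta\gamma}$ and $X^{i}f^{\gamma}_{,i}f^{\beta}_{,\varepsilon}\tilde{\Gamma}^{\alpha}_{~\beta\gamma}$ into one another. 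Conceptually these two facts encode, respectively, that $[\partial_{\varepsilon},X]=0$ for the trivial prolongation and that the torsion of the target connection carries no horizontal part, cf. \eqref{torsion_property}; indeed, using \eqref{d-connection} one could instead argue invariantly that the difference of the two sides equals $\tilde{h}\big(F_{\ast}[\partial_{\varepsilon},X]+\tilde{T}(F_{\ast}\partial_{\varepsilon},F_{\ast}X)\big)=0$. I expect the only delicate point to be the bookkeeping: one must check that the vertical components of $F_{\ast}\partial_{\varepsilon}$ and $F_{\ast}X$ genuinely drop out and that the non-holonomy of the adapted frame introduces no extra terms — both of which are bypassed cleanly by differentiating through the connection coefficients \eqref{chern_conn} rather than through the frame fields themselves.
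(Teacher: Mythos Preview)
Your argument is correct. The paper, however, takes exactly the invariant route you relegate to a closing remark: it observes that $\tilde{h}\tilde{T}(F_{\ast}\partial_{\varepsilon},F_{\ast}X)=0$ by \eqref{torsion_property}, expands this as the horizontal part of $D^{df}_{\partial_{\varepsilon}}(F_{\ast}X)-D^{df}_{X}(F_{\ast}\partial_{\varepsilon})-F_{\ast}[\partial_{\varepsilon},X]$, and concludes from \eqref{d-connection} together with $[\partial_{\varepsilon},X]=0$. So what you present as an afterthought is the paper's entire proof, and what you develop in detail---the coordinate expansion via \eqref{pullback_conn} and \eqref{chern_conn}, matching the mixed partials and the symmetric Christoffel terms---is a genuinely different, more elementary derivation. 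Your approach has the virtue of making the two ingredients (commuting partials, symmetric $\tilde{\Gamma}^{\alpha}_{~\beta\gamma}$) completely explicit and of verifying directly that the vertical parts of $F_{\ast}\partial_{\varepsilon}$ and $F_{\ast}X$ drop out; the paper's torsion argument is shorter and coordinate-free but leaves those checks implicit in the properties of $\tilde{D}$. One small point: the lemma is stated for arbitrary $X\in\Gamma(TTM)$, while you compute only for horizontal $X=X^{i}\delta_{i}$; you might add a line noting that for vertical $X$ both sides vanish (since $df^{\tilde{h}}$ annihilates vertical vectors and $f^{\alpha}_{,\varepsilon}$ is independent of $y$), so the general case follows by linearity.
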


\begin{proof}
(\ref{torsion_property}) says that $0=$ $\tilde{h}\tilde{T}(F_{\ast
}\partial _{\varepsilon },F_{\ast }X)=$ $h\{D_{\partial _{\varepsilon
}}^{df}(F_{\ast }X)-D_{X}^{df}(F_{\ast }\partial _{\varepsilon })-[F_{\ast
}X,F_{\ast }\partial _{\varepsilon }]\}.$ The result follows then from (\ref%
{d-connection})\ and $[X,\partial _{\varepsilon }]=0$.
\end{proof}

\section{Bienergy and its first variation}

The energy of a Finsler-to-Riemann mapping $\phi :M\rightarrow \tilde{M},$ 
\cite{Mo}, is $E(\phi )=\dfrac{1}{2}\underset{BM}{\int }g^{ij}\tilde{g}%
_{\alpha \beta }\phi _{~,i}^{\alpha }\phi _{~,j}^{\beta }d\mathcal{V}_{g},$
(note that $\phi =\phi (x),$ hence $\phi _{~,i}^{\alpha }=\delta _{i}\phi
^{\alpha }$), or, in our language: 
\begin{equation*}
E(\phi )=\dfrac{1}{2}\underset{BM}{\int }g^{ij}\left\langle d\phi ^{\tilde{h}%
}(\delta _{i}),d\phi ^{\tilde{h}}(\delta _{i})\right\rangle d\mathcal{V}_{g},
\end{equation*}%
with $\left\langle ~~,~~\right\rangle $ as in (\ref{scalar_product}). The 
\textit{tension} of $\phi :M\rightarrow \tilde{M}$, \cite{Mo}, can be
regarded as a section of $(d\phi )^{-1}HT\tilde{M}$:%
\begin{equation}
\tau (\phi )=g^{ij}\{D_{\delta _{i}}^{d\phi }d\phi ^{\tilde{h}}(\delta
_{j})-d\phi ^{\tilde{h}}(D_{\delta _{i}}\delta _{j})-P_{i}d\phi ^{\tilde{h}%
}(\delta _{j})\}.  \label{tension_FR}
\end{equation}%
The mapping $\phi $ is \textit{harmonic} iff its tension vanishes
identically.

\bigskip

It appears as natural to define the \textit{bienergy }of a mapping $\phi
:M\rightarrow \tilde{M}$ as:%
\begin{equation}
E_{2}(\phi )=\dfrac{1}{2}\underset{BM}{\int }\left\langle \tau (\phi ),\tau
(\phi )\right\rangle d\mathcal{V}_{g}.  \label{bienergy}
\end{equation}
Accordingly, by a Finsler-to-Riemann \textit{biharmonic map} we will mean a
critical point of the bienergy (\ref{bienergy}).

In order to determine the critical points of $E_{2}$, we take variations $%
f=f(\varepsilon ,x)$ of $\phi $ as above and denote by%
\begin{equation}
\mathbf{V}:=(df(\partial _{\varepsilon }))^{\tilde{h}}=df^{\tilde{h}%
}(\partial _{\varepsilon }^{~\tilde{h}}),~\ \ \ \ \ \ V:=\mathbf{V}%
_{|\varepsilon =0.}  \label{variation_vector_field}
\end{equation}%
the horizontal lift of the associated deviation vector field $df(\partial
_{\varepsilon })$.

Since the Chern-Rund connection on the codomain $T\tilde{M}$ is metrical,%
\begin{equation*}
\dfrac{dE_{2}}{d\varepsilon }(f)=\dfrac{1}{2}\dfrac{d}{d\varepsilon }%
\underset{BM}{\int }\left\langle \tau (f),\tau (f)\right\rangle d\mathcal{V}%
_{g}=\underset{BM}{\int }\left\langle D_{\partial _{\varepsilon }}^{df}\tau
(f),\tau (f)\right\rangle d\mathcal{V}_{g}.
\end{equation*}

Let us evaluate the term $D_{\partial _{\varepsilon }}^{df}\tau (f):$

\begin{eqnarray*}
&&D_{\partial _{\varepsilon }}^{df}\tau (f)=D_{\partial _{\varepsilon
}}^{df}\{g^{ij}(D_{\delta _{i}}^{df}(df^{\tilde{h}}(\delta _{j}))-df^{\tilde{%
h}}(D_{\delta _{i}}\delta _{j})-P_{i}df^{\tilde{h}}(\delta _{j}))\}= \\
&=&g^{ij}\left\{ D_{\partial _{\varepsilon }}^{df}D_{\delta _{i}}^{df}(df^{%
\tilde{h}}(\delta _{j}))-D_{\partial _{\varepsilon }}^{df}(df^{\tilde{h}%
}(D_{\delta _{i}}\delta _{j}))-D_{\partial _{\varepsilon }}^{df}(P_{i}df^{%
\tilde{h}}(\delta _{j}))\right\}
\end{eqnarray*}%
($g^{ij}$ can be taken in front of the $\partial _{\varepsilon }$%
-derivative, since in $g^{ij}=g^{ij}(x,y)$ the coordinates $x,y$ do not
depend on $\varepsilon $). Commuting derivatives by means of the curvature
tensor of $\tilde{D},$ taking (\ref{Riem_R}) and $[\delta _{i},\partial
_{\varepsilon }]=0$ into account, 
\begin{equation*}
D_{\partial _{\varepsilon }}^{df}D_{\delta _{i}}^{df}(df^{\tilde{h}}(\delta
_{j}))=\tilde{R}(\mathbf{V},df^{\tilde{h}}(\delta _{i}))df^{\tilde{h}%
}(\delta _{j})+D_{\delta _{i}}^{df}D_{\partial _{\varepsilon }}^{df}(df^{%
\tilde{h}}(\delta _{j})).
\end{equation*}%
By (\ref{commutation_rule_2}), the term $D_{\delta _{i}}^{df}D_{\partial
_{\varepsilon }}^{df}(df^{\tilde{h}}(\delta _{j}))$ becomes $D_{\delta
_{i}}^{df}D_{\delta _{j}}^{df}\mathbf{V.}$ Using (\ref{commutation_rule_2})
also in the expression $D_{\partial _{\varepsilon }}^{df}(df^{\tilde{h}%
}(D_{\delta _{i}}\delta _{j}))-D_{\partial _{\varepsilon }}^{df}(P_{i}df^{%
\tilde{h}}(\delta _{j}))$ and summing up, we get:%
\begin{eqnarray}
D_{\partial _{\varepsilon }}^{df}\tau (f) &=&g^{ij}\{\tilde{R}(\mathbf{V}%
,df^{\tilde{h}}(\delta _{i}))df^{\tilde{h}}(\delta _{j})+D_{\delta
_{i}}^{df}D_{\delta _{j}}^{df}\mathbf{V}\mathcal{-}  \label{derivative_tau}
\\
&&-D_{D_{\delta _{i}}\delta _{j}}^{df}\mathbf{V}-P_{i}D_{\delta _{j}}^{df}%
\mathbf{V}\}.  \notag
\end{eqnarray}

We notice the operators%
\begin{equation}
g^{ij}(-D_{\delta _{i}}^{df}D_{\delta _{j}}^{df}+D_{D_{\delta _{i}}\delta
_{j}}^{df}+P_{i}D_{\delta _{j}}^{df})=:\Delta ^{df},~\mathcal{J}=-\Delta
^{df}-trace_{g}\tilde{R}(df^{\tilde{h}},~\cdot ~)df^{\tilde{h}},
\label{rough_Laplacian}
\end{equation}%
acting on sections of the bundle $(df)^{-1}(HT\tilde{M})$. With this, we
have: 
\begin{equation}
D_{\partial _{\varepsilon }}^{df}\tau (f)=-\Delta ^{df}\mathbf{V}-g^{ij}%
\tilde{R}(df^{\tilde{h}}(\delta _{i}),\mathbf{V})df^{\tilde{h}}(\delta _{j})=%
\mathcal{J}(\mathbf{V}).  \label{deriv_tau_1}
\end{equation}%
Evaluating at $\varepsilon =0$ and substituting into the expression of the
variation, 
\begin{equation}
\dfrac{dE_{2}}{d\varepsilon }(f)|_{\varepsilon =0}=\underset{BM}{\int }%
\left\langle \mathcal{J}(V),\tau (\phi )\right\rangle d\mathcal{V}_{g}.
\label{first_variation_interm}
\end{equation}%
It remains to transform the above expression so as to have $V$ in the right
hand side of the scalar product. This will be easy using the following lemma.

\begin{lemma}
\label{Lemma1} The operators $\Delta ^{d\phi }$ and $\mathcal{J}$ are
self-adjoint:%
\begin{equation}
\underset{BM}{\int }\left\langle \Delta ^{d\phi }X,Y\right\rangle d\mathcal{V%
}_{g}=\underset{BM}{\int }\left\langle X,\Delta ^{d\phi }Y\right\rangle d%
\mathcal{V}_{g},\ \underset{BM}{\int }\left\langle \mathcal{J}%
X,Y\right\rangle d\mathcal{V}_{g}=\underset{BM}{\int }\left\langle X,%
\mathcal{J}Y\right\rangle d\mathcal{V}_{g}.  \label{self-adj}
\end{equation}%
for any $X,Y\in \Gamma (d\phi ^{-1}(HT\tilde{M})).$
\end{lemma}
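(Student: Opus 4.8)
The plan is to show that, pointwise, the skew part $\langle \Delta^{d\phi}X,Y\rangle-\langle X,\Delta^{d\phi}Y\rangle$ is a horizontal divergence, so that its integral over $BM$ vanishes by the divergence theorem for the Holmes--Thompson volume; the curvature term that distinguishes $\mathcal{J}$ from $-\Delta^{d\phi}$ will turn out to be pointwise symmetric, so self-adjointness of $\mathcal{J}$ follows at once from that of $\Delta^{d\phi}$. The one structural input I would rely on is that the codomain Chern--Rund connection is fully metrical, (\ref{full_metricity}); since $D^{d\phi}$ is defined by pullback, (\ref{pullback_conn_TM})--(\ref{d-connection}), metricity transfers to it, giving $\delta_i\langle X,Y\rangle=\langle D^{d\phi}_{\delta_i}X,Y\rangle+\langle X,D^{d\phi}_{\delta_i}Y\rangle$ for all $X,Y\in\Gamma(d\phi^{-1}(HT\tilde M))$.

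First I would introduce the horizontal $1$-form $\omega_j:=\langle D^{d\phi}_{\delta_j}X,Y\rangle-\langle X,D^{d\phi}_{\delta_j}Y\rangle$ and the horizontal vector field $W:=g^{ij}\omega_j\,\delta_i$. Applying metricity twice and contracting with the symmetric $g^{ij}$, the two first-order cross terms of the form $g^{ij}\langle D_{\delta_j}X,D_{\delta_i}Y\rangle$ cancel after relabelling $i\leftrightarrow j$, leaving $g^{ij}\delta_i\omega_j=g^{ij}\langle D_{\delta_i}D_{\delta_j}X,Y\rangle-g^{ij}\langle X,D_{\delta_i}D_{\delta_j}Y\rangle$ (dropping the superscript $d\phi$). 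Next, using $D_{\delta_i}\delta_j=\Gamma_{~ij}^{k}\delta_k$ from (\ref{chern_conn}) and the definition of $\Delta^{d\phi}$ in (\ref{rough_Laplacian}), the second-derivative terms assemble into $\langle \Delta^{d\phi}X,Y\rangle-\langle X,\Delta^{d\phi}Y\rangle=-g^{ij}\delta_i\omega_j+g^{ij}\Gamma_{~ij}^{k}\omega_k+g^{ij}P_i\omega_j$. This is exactly $-\mathrm{div}\,W$ by the Finslerian divergence identity (\ref{divergence}) applied to $X_j=\omega_j$, with $W^{i}=g^{ij}\omega_j$: the non-Riemannian correction terms $g^{ij}\Gamma_{~ij}^{k}\omega_k$ and $g^{ij}P_i\omega_j$ cancel precisely the corresponding terms of (\ref{divergence}).

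Integrating over $BM$ and invoking the divergence theorem for the Holmes--Thompson volume, $\int_{BM}\mathrm{div}\,W\,d\mathcal{V}_g=0$, yields the first identity in (\ref{self-adj}). For $\mathcal{J}=-\Delta^{d\phi}-\mathrm{trace}_g\,\tilde R(d\phi^{\tilde h},\,\cdot\,)d\phi^{\tilde h}$ it remains to handle the curvature term. Writing $A_i:=d\phi^{\tilde h}(\delta_i)$, the pair symmetry $\langle\tilde R(a,b)c,d\rangle=\langle\tilde R(c,d)a,b\rangle$ of the codomain curvature (available for $\tilde R$ by (\ref{Riem_R}) and the stated symmetries) gives $\langle\tilde R(A_i,X)A_j,Y\rangle=\langle\tilde R(A_j,Y)A_i,X\rangle$; contracting with the symmetric $g^{ij}$ and relabelling $i\leftrightarrow j$ shows $g^{ij}\langle\tilde R(A_i,X)A_j,Y\rangle=g^{ij}\langle X,\tilde R(A_i,Y)A_j\rangle$, so this term is already pointwise symmetric in $X,Y$ and requires no integration by parts. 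Combined with the self-adjointness of $\Delta^{d\phi}$, this gives the second identity in (\ref{self-adj}).

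I expect the main difficulty to be bookkeeping rather than conceptual: one must check that every Finslerian correction (the torsion $1$-form $P_i$, the non-symmetric adapted Christoffel symbols, and the anholonomy term hidden inside $\mathrm{div}$) collects into exactly the combination of (\ref{divergence}), so that the pointwise identity is a genuine divergence with no residual terms. The remaining point needing care is the validity of the divergence theorem on the unit ball bundle $BM$ for the Holmes--Thompson measure; this is where the compactness of $BM$ and the precise definition of $\mathrm{div}$ enter, and it is the only analytic, as opposed to purely algebraic, ingredient of the argument.
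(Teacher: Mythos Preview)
Your proof is correct and follows essentially the same route as the paper: integration by parts via the divergence identity (\ref{divergence}), metricity of the pullback connection inherited from (\ref{full_metricity}), and the pair symmetry of $\tilde R$ for the curvature term. The only cosmetic difference is that the paper integrates by parts twice, passing through the symmetric intermediate form $\int_{BM}\langle \Delta^{d\phi}X,Y\rangle\,d\mathcal{V}_g=-\int_{BM}g^{ij}\langle D^{d\phi}_{\delta_i}X,D^{d\phi}_{\delta_j}Y\rangle\,d\mathcal{V}_g$, whereas you show in one stroke that the skew part $\langle \Delta^{d\phi}X,Y\rangle-\langle X,\Delta^{d\phi}Y\rangle$ is a horizontal divergence.
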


\begin{proof}
We start from the left hand side of the first relation (\ref{self-adj});
integrating by parts the term $\underset{BM}{\int }\left\langle
-g^{ij}D_{\delta _{i}}^{df}D_{\delta _{j}}^{df}X,Y\right\rangle d\mathcal{V}%
_{g}$ and applying (\ref{divergence}), we get:%
\begin{equation}
\underset{BM}{\int }\left\langle \Delta ^{d\phi }X,Y\right\rangle d\mathcal{V%
}_{g}=-\underset{BM}{\int }g^{ij}\left\langle D_{~\delta _{i}}^{d\phi
}X,~D_{~\delta _{j}}^{d\phi }Y\right\rangle d\mathcal{V}_{g}.
\label{adjoint1}
\end{equation}%
Integrating once again by parts, we obtain (\ref{self-adj}). The
self-adjointness of $\mathcal{J}$ follows then from the symmetries of $%
\tilde{R}.$
\end{proof}

The operator $\Delta ^{d\phi }$ is a generalization of the rough Laplacian
from Riemannian geometry, built in the same spirit as the horizontal
Laplacian acting on differential forms in \cite{Zhong}, \cite{Zhong1}.

Using Lemma \ref{Lemma1} in (\ref{first_variation_interm}), we get:

\begin{proposition}
a) The first variation of the bienergy of a mapping $\phi :M\rightarrow 
\tilde{M}$ from the Finsler space $(M,g)$ to the Riemann space $(\tilde{M},%
\tilde{g})$ is:%
\begin{equation}
\dfrac{dE_{2}(f)}{d\varepsilon }|_{\varepsilon =0}=\underset{BM}{\int }%
\left\langle -\Delta ^{d\phi }\tau (\phi )-trace_{g}\tilde{R}(d\phi ^{\tilde{%
h}},\tau (\phi ))d\phi ^{\tilde{h}},V\right\rangle d\mathcal{V}_{g};
\label{first_var_bienergy}
\end{equation}

b) The mapping $\phi $ is biharmonic iff: 
\begin{equation}
\tau _{2}(\phi ):=-\Delta ^{d\phi }\tau (\phi )-trace_{g}\tilde{R}(d\phi ^{%
\tilde{h}},\tau (\phi ))d\phi ^{\tilde{h}}=0.
\label{biharmonic_Finsler_to_Riemann}
\end{equation}
\end{proposition}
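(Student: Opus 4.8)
The plan is to read off part (a) directly from the intermediate first-variation formula (\ref{first_variation_interm}) combined with the self-adjointness established in Lemma \ref{Lemma1}, and then to obtain part (b) as the vanishing of the first variation over all admissible variations. Essentially all of the analytic work is already done: the commutation of the $\partial_\varepsilon$-derivative through $D^{df}$ by means of the curvature of $\tilde D$ (leading to (\ref{deriv_tau_1})), and the two integrations by parts hidden inside Lemma \ref{Lemma1}. What remains is short.

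For (a), I would start from $\dfrac{dE_2}{d\varepsilon}(f)|_{\varepsilon=0}=\underset{BM}{\int}\langle\mathcal{J}(V),\tau(\phi)\rangle d\mathcal{V}_g$ in (\ref{first_variation_interm}) and apply the self-adjointness of $\mathcal{J}$ from (\ref{self-adj}), taking $X=V$ and $Y=\tau(\phi)$, both of which are sections of $d\phi^{-1}(HT\tilde M)$. This transfers $\mathcal{J}$ onto $\tau(\phi)$, giving $\underset{BM}{\int}\langle V,\mathcal{J}(\tau(\phi))\rangle d\mathcal{V}_g$, and by the symmetry of the fibre product (\ref{scalar_product}) this equals $\underset{BM}{\int}\langle\mathcal{J}(\tau(\phi)),V\rangle d\mathcal{V}_g$. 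It then suffices to recognise, from the definition of $\mathcal{J}$ in (\ref{rough_Laplacian}), that $\mathcal{J}(\tau(\phi))=-\Delta^{d\phi}\tau(\phi)-trace_g\tilde R(d\phi^{\tilde h},\tau(\phi))d\phi^{\tilde h}$, which is exactly the section $\tau_2(\phi)$ of (\ref{biharmonic_Finsler_to_Riemann}). This produces the stated expression (\ref{first_var_bienergy}) with $V$ placed in the right-hand slot of the scalar product.

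For (b), $\phi$ is by definition a critical point of $E_2$ iff $\dfrac{dE_2}{d\varepsilon}(f)|_{\varepsilon=0}=0$ for every admissible variation $f$, hence for every associated deviation field $V$. Part (a) rewrites this condition as $\underset{BM}{\int}\langle\tau_2(\phi),V\rangle d\mathcal{V}_g=0$ for all such $V$, and the conclusion $\tau_2(\phi)=0$ is then obtained from the fundamental lemma of the calculus of variations, in exact parallel with the way the harmonicity criterion $\tau(\phi)=0$ follows from the first variation of the energy in \cite{Mo}.

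The step I expect to require the most care is precisely this last one. The field $V=df^{\tilde h}(\partial_\varepsilon^{~\tilde h})|_{\varepsilon=0}$ is the horizontal lift of a vector field along $\phi$ depending only on $x$, so its components in the adapted coframe are $x$-dependent, whereas $\tau_2(\phi)$ and the density $\det g$ genuinely depend on $(x,y)$; moreover the pairing uses $\tilde g_{\alpha\beta}=\tilde g_{\alpha\beta}(\phi(x))$, which again does not depend on $y$. Consequently the test objects probe $\tau_2(\phi)$ only through its fibre integral over each $B_x$, and one must argue that arbitrariness of the $x$-dependent components suffices, adopting — as in the harmonic case of \cite{Mo} — the convention under which $\tau_2(\phi)$ is the Euler--Lagrange section whose vanishing is equivalent to the vanishing of this fibre-averaged pairing for all $V$. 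Carrying out this fibre-integration bookkeeping cleanly, rather than the surrounding algebra, is the genuine content of part (b).
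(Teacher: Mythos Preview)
Your proposal is correct and follows exactly the paper's approach: the paper simply says ``Using Lemma \ref{Lemma1} in (\ref{first_variation_interm}), we get'' and states the proposition, which is precisely your argument for (a). Your final paragraph on the fibre-integration subtlety for (b) --- that $V$ has only $x$-dependent components while $\tau_2(\phi)$ genuinely depends on $(x,y)$, so the fundamental lemma yields vanishing of a fibre average rather than pointwise vanishing --- is a point the paper passes over in silence; you are being more careful than the paper here, and your resolution by appeal to the convention in \cite{Mo} is the right move.
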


\textbf{Remarks. }1) In the above, we considered, as in \cite{Jiang}, that $%
M $ is compact and without boundary. Elsewhere, all the discussion can be
made on a compact subset $\mathcal{D}$ of $M$; in this case, we assume that,
on the boundary of $\mathcal{D}$, the vector field $V$ and the covariant
derivatives $D_{\delta _{i}}^{d\phi }V$ vanish.

2) Any harmonic map from a Finsler space to a Riemann one is biharmonic,
namely, a minimum point for the bienergy functional. A biharmonic map which
is not harmonic will be called \textit{proper biharmonic.}

\textbf{Particular cases:}

1)\ If $\tilde{M}=\mathbb{R}^{n}$ with the Euclidean metric, then the
biharmonic equation (\ref{biharmonic_Finsler_to_Riemann}) becomes:%
\begin{equation*}
\Delta ^{d\phi }\tau (\phi )=0.
\end{equation*}

2)\ If $\tilde{M}=\mathbb{S}^{n}$ is the unit Euclidean sphere, then, using
the expression of the Riemann tensor of a space form, we get that $\phi
:M\rightarrow \mathbb{S}^{n}$ is biharmonic iff:%
\begin{equation*}
\Delta ^{d\phi }\tau (\phi )+2e(\phi )\tau (\phi )-trace_{g}\left\langle
d\phi ^{\tilde{h}},\tau (\phi )\right\rangle d\phi ^{\tilde{h}}=0,
\end{equation*}%
where $e(\phi )=\dfrac{1}{2}trace_{g}\left\langle d\phi ^{\tilde{h}},d\phi ^{%
\tilde{h}}\right\rangle $ is the energy density of $\phi .$ The result is
similar to the one in the Riemannian case, \cite{Balmus}.

3)\ If $M$ is a \textit{weakly Landsberg }manifold, i.e., \cite{Mo}, if $%
P=0, $ then the expressions of the tension and of the rough Laplacian become
formally similar to the ones in the Riemannian case - just, depending on the
fiber coordinates $y^{i}$: $\tau (\phi )=trace_{g}D^{d\phi }(d\phi ^{\tilde{h%
}}),~\ \ \Delta ^{df}=g^{ij}(-D_{\delta _{i}}^{df}D_{\delta
_{j}}^{df}+D_{D_{\delta _{i}}\delta _{j}}^{df}).$

\section{Existence of proper biharmonic maps}

The following two results represent generalizations to Finsler-to-Riemann
maps of two theorems in \cite{Jiang} and \cite{Oniciuc} respectively.

\begin{theorem}
If $(M,g)$ is a compact Finslerian manifold without boundary and $(\tilde{M},%
\tilde{g})$ is Riemannian with nonpositive sectional curvature, then any
biharmonic map $\phi :M\rightarrow \tilde{M}$ is harmonic.
\end{theorem}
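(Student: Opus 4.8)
The plan is to transport Jiang's Bochner-type argument from the Riemannian case into the Finslerian framework essentially verbatim, the only novelty being the systematic cancellation of the torsion terms $P_i$ and of the Chern-Rund symbols $\Gamma_{~ij}^{k}$. Assume $\phi$ is biharmonic, so that $\tau_{2}(\phi)=0$ by (\ref{biharmonic_Finsler_to_Riemann}). I would take the fiber inner product of (\ref{biharmonic_Finsler_to_Riemann}) with the tension $\tau(\phi)$ and integrate over the unit ball bundle $BM$, obtaining
\begin{equation*}
0=\int_{BM}\langle -\Delta^{d\phi}\tau(\phi),\tau(\phi)\rangle d\mathcal{V}_{g}-\int_{BM}\langle trace_{g}\tilde{R}(d\phi^{\tilde{h}},\tau(\phi))d\phi^{\tilde{h}},\tau(\phi)\rangle d\mathcal{V}_{g}.
\end{equation*}
The whole proof then reduces to showing that each of the two integrals on the right is sign-definite, so that both are forced to vanish.

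For the first integral I would invoke the self-adjointness and integration-by-parts of Lemma \ref{Lemma1} to write $\int_{BM}\langle \Delta^{d\phi}\tau(\phi),\tau(\phi)\rangle d\mathcal{V}_{g}=\int_{BM} g^{ij}\langle D_{\delta_{i}}^{d\phi}\tau(\phi),D_{\delta_{j}}^{d\phi}\tau(\phi)\rangle d\mathcal{V}_{g}$, so that the first term above equals $-\int_{BM} g^{ij}\langle D_{\delta_{i}}^{d\phi}\tau(\phi),D_{\delta_{j}}^{d\phi}\tau(\phi)\rangle d\mathcal{V}_{g}\le 0$, since $g^{ij}$ and $\langle\,,\,\rangle$ are positive definite. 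For the second, by (\ref{Riem_R}) the curvature $\tilde{R}$ coincides with $R^{\tilde{\nabla}}$ and enjoys the usual Riemannian symmetries; diagonalizing $g^{ij}$ at a point and setting $X_{i}:=d\phi^{\tilde{h}}(\delta_{i})$, the curvature integrand becomes the sum of sectional-curvature terms $-\sum_{i}K(X_{i},\tau(\phi))\bigl(|X_{i}|^{2}|\tau(\phi)|^{2}-\langle X_{i},\tau(\phi)\rangle^{2}\bigr)$, each summand being $\ge 0$ under the nonpositivity hypothesis (the Gram factor is nonnegative by Cauchy-Schwarz). Hence the curvature integrand is nonnegative and the second term above, carrying a minus sign, is $\le 0$. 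Two nonpositive quantities summing to zero must both vanish; positive definiteness of $g^{ij}$ and $\langle\,,\,\rangle$ then forces $D_{\delta_{i}}^{d\phi}\tau(\phi)=0$ for every $i$, i.e. the tension is horizontally parallel.

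The remaining step upgrades ``horizontally parallel'' to ``identically zero''. I would introduce the horizontal vector field $W=W^{i}\delta_{i}$ on $TM$ with $W^{i}=g^{ij}\langle d\phi^{\tilde{h}}(\delta_{j}),\tau(\phi)\rangle$ and compute its divergence via (\ref{divergence}). Expanding $g^{ij}\delta_{i}\langle d\phi^{\tilde{h}}(\delta_{j}),\tau(\phi)\rangle$ by metricity of the pullback connection, substituting the definition (\ref{tension_FR}) of the tension and using $D_{\delta_{i}}\delta_{j}=\Gamma_{~ij}^{k}\delta_{k}$, the $\Gamma_{~ij}^{k}$- and $P_{i}$-corrections in (\ref{divergence}) cancel against those produced by (\ref{tension_FR}), leaving $divW=|\tau(\phi)|^{2}+g^{ij}\langle d\phi^{\tilde{h}}(\delta_{j}),D_{\delta_{i}}^{d\phi}\tau(\phi)\rangle$. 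Since $D^{d\phi}\tau(\phi)=0$, this collapses to $divW=|\tau(\phi)|^{2}$; integrating over $BM$ and using that the integral of such a horizontal divergence vanishes (the same fact underlying Lemma \ref{Lemma1}) gives $\int_{BM}|\tau(\phi)|^{2}d\mathcal{V}_{g}=0$, hence $\tau(\phi)\equiv 0$, i.e. $\phi$ is harmonic.

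I expect the Bochner/curvature step to be formally identical to its Riemannian counterpart, thanks to (\ref{Riem_R}) and Lemma \ref{Lemma1}; the genuinely Finslerian work lies in the last paragraph. The main obstacle is the bookkeeping there: verifying that the extra $-P_{i}d\phi^{\tilde{h}}(\delta_{j})$ term built into the tension (\ref{tension_FR}) and the $P_{i}$, $\Gamma_{~ij}^{k}$ corrections in the divergence formula (\ref{divergence}) cancel exactly, so that $divW$ reduces precisely to $|\tau(\phi)|^{2}$ with no residual terms, and confirming that the divergence theorem is available on the total space $BM$ (integration over the fibers $B_{x}$ together with the Holmes-Thompson volume). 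This is the step where a stray torsion term or a sign could most easily slip in, so it is the one I would carry out most carefully.
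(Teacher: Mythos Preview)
Your proposal is correct and follows essentially the same approach as the paper's proof: both transplant Jiang's argument, first showing $D_{\delta_i}^{d\phi}\tau(\phi)=0$ from biharmonicity plus nonpositive sectional curvature, then finishing with the divergence computation for the vector field $W^{i}=g^{ij}\langle d\phi^{\tilde h}(\delta_j),\tau(\phi)\rangle$. The only cosmetic difference is in the first step: the paper derives the pointwise Weitzenb\"ock identity $-\tfrac12\Delta\|\tau(\phi)\|^{2}=-\langle\Delta^{d\phi}\tau,\tau\rangle+g^{ij}\langle D_{\delta_i}^{d\phi}\tau,D_{\delta_j}^{d\phi}\tau\rangle$ and then integrates, whereas you integrate the biharmonic equation against $\tau$ from the outset and invoke Lemma~\ref{Lemma1} directly---same content, slightly different packaging.
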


\begin{proof}
The proof follows similar steps to the one in the Riemannian case, \cite%
{Jiang}. We apply the horizontal Laplace-Beltrami operator, \cite{Zhong}, $%
\Delta f:=-div(grad_{h}~f),$ where $grad_{h}f:=(g^{ij}\delta _{j}f)\delta
_{i},$ to the scalar function $f:=\left\Vert \tau (\phi )\right\Vert ^{2},$
defined on $TM:$%
\begin{equation*}
\begin{array}{c}
-\dfrac{1}{2}\Delta \left\Vert \tau (\phi )\right\Vert ^{2}=\dfrac{1}{2}%
\{D_{\delta _{i}}(g^{ij}\delta _{j}\left\Vert \tau (\phi )\right\Vert
^{2})-g^{ij}P_{i}\delta _{j}\left\Vert \tau (\phi )\right\Vert ^{2}\}= \\ 
=\dfrac{1}{2}g^{ij}\{\delta _{i}\delta _{j}\left\Vert \tau (\phi
)\right\Vert ^{2}-\Gamma _{~ij}^{k}\delta _{k}\left\Vert \tau (\phi
)\right\Vert ^{2}-P_{i}\delta _{j}\left\Vert \tau (\phi )\right\Vert ^{2}\}.%
\end{array}%
\end{equation*}

Taking into account that $\Gamma _{~ij}^{k}\delta _{k}=D_{\delta _{i}}\delta
_{j}$ and expressing the action of the adapted basis vector fields $\delta
_{i},\delta _{j}$ on $\left\Vert \tau (\phi )\right\Vert ^{2}=\left\langle
\tau (\phi ),\tau (\phi )\right\rangle $ in terms of $D^{d\phi }$-covariant
derivatives, we obtain:%
\begin{equation}
-\dfrac{1}{2}\Delta \left\Vert \tau (\phi )\right\Vert ^{2}=-\left\langle
\Delta ^{d\phi }\tau (\phi ),\tau (\phi )\right\rangle +g^{ij}\left\langle
D_{\delta _{i}}^{d\phi }\tau (\phi ),D_{\delta _{j}}^{d\phi }\tau (\phi
)\right\rangle .  \notag
\end{equation}

By means of the biharmonic equation (\ref{biharmonic_Finsler_to_Riemann}),
this becomes:%
\begin{equation}
-\dfrac{1}{2}\Delta \left\Vert \tau (\phi )\right\Vert ^{2}=\left\langle
trace_{g}\tilde{R}(d\phi ^{\tilde{h}},\tau (\phi ))d\phi ^{\tilde{h}},\tau
(\phi )\right\rangle +g^{ij}\left\langle D_{\delta _{i}}^{d\phi }\tau (\phi
),D_{\delta _{j}}^{d\phi }\tau (\phi )\right\rangle .  \label{Weitz1}
\end{equation}%
According to the hypothesis that the sectional curvature of $(\tilde{M},%
\tilde{g})$ is nonpositive, the curvature term above is nonnegative; since $%
g^{ij}\left\langle D_{\delta _{i}}^{d\phi }\tau (\phi ),D_{\delta
_{j}}^{d\phi }\tau (\phi )\right\rangle $ (as a squared norm) is
nonnegative, too, we get: $-\dfrac{1}{2}\Delta \left\Vert \tau (\phi
)\right\Vert ^{2}\geq 0.$

On the other side, we have, \cite{Zhong}, $\underset{BM}{\int }\Delta
\left\Vert \tau (\phi )\right\Vert ^{2}d\mathcal{V}_{g}=0$, hence, \ $\Delta
\left\Vert \tau (\phi )\right\Vert ^{2}=0;$ thus, by (\ref{Weitz1}),\ $%
g^{ij}\left\langle D_{\delta _{i}}^{d\phi }\tau (\phi ),D_{\delta
_{j}}^{d\phi }\tau (\phi )\right\rangle =0;$ as a consequence, 
\begin{equation}
D_{\delta _{j}}^{d\phi }\tau (\phi )=0.  \label{*}
\end{equation}%
Take the horizontal vector field $X:=(g^{ij}\left\langle \phi _{,i},\tau
(\phi )\right\rangle )\delta _{j}$ on $TM;$ by (\ref{*}), we get:%
\begin{equation*}
0=\underset{BM}{\int }divXd\mathcal{V}_{g}=\underset{BM}{\int }\underset{%
\geq 0}{\underbrace{\left\langle \tau (\phi ),\tau (\phi )\right\rangle }}d%
\mathcal{V}_{g}
\end{equation*}%
and therefore, $\left\langle \tau (\phi ),\tau (\phi )\right\rangle
=0\Rightarrow $ $\tau (\phi )=0,$ i.e., $\phi $ is harmonic.
\end{proof}

Dropping any condition upon the compactness or on the boundary of $M,$ we
have:

\begin{theorem}
Let $(M,g)$ be an arbitrary Finsler space (not necessarily compact), $(%
\tilde{M},\tilde{g}),$ a Riemannian manifold with strictly negative
sectional curvature and $\phi :M\rightarrow \tilde{M},$ a biharmonic map. If 
$\phi $ has the properties: 1)\ $\left\Vert \tau (\phi )\right\Vert =const.$
and 2)\ there exists a point $x_{0}\in M$ at which the rank of $\phi $ is at
least 2, then $\phi $ is harmonic.
\end{theorem}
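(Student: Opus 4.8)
The plan is to reproduce Oniciuc's argument in the Finslerian setting, exploiting the fact that the constancy of $\left\Vert \tau (\phi )\right\Vert $ lets us discard the compactness assumption used in the previous theorem. The starting point is the pointwise Bochner-type identity (\ref{Weitz1}), which was derived above for \emph{any} biharmonic map and holds verbatim here:
\begin{equation*}
-\tfrac{1}{2}\Delta \left\Vert \tau (\phi )\right\Vert ^{2}=\left\langle trace_{g}\tilde{R}(d\phi ^{\tilde{h}},\tau (\phi ))d\phi ^{\tilde{h}},\tau (\phi )\right\rangle +g^{ij}\left\langle D_{\delta _{i}}^{d\phi }\tau (\phi ),D_{\delta _{j}}^{d\phi }\tau (\phi )\right\rangle .
\end{equation*}
Since hypothesis 1) states that $\left\Vert \tau (\phi )\right\Vert $ is constant on $TM$ (equivalently, on $BM$), so is $\left\Vert \tau (\phi )\right\Vert ^{2}$, and the horizontal Laplace--Beltrami operator $\Delta =-div(grad_{h}~\cdot )$ annihilates constants; hence the left-hand side vanishes identically, \emph{without} any integration over $M$. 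This is precisely where the compactness/boundary hypotheses of the previous theorem are traded for the constant-norm assumption.

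Both summands on the right are nonnegative: the second is a $g$-trace of squared norms, while the first, exactly as in the previous proof, is $\geq 0$ because the sectional curvature of $(\tilde{M},\tilde{g})$ is $\leq 0$. As their sum is zero, each vanishes pointwise on $BM$. I would then extract the geometric content of the curvature term: choosing a $g$-orthonormal frame $\{e_{a}\}$ of $T_{x}M$ and using the symmetries of $\tilde{R}$, it becomes
\begin{equation*}
\sum_{a}\left\langle \tilde{R}(d\phi ^{\tilde{h}}(e_{a}),\tau (\phi ))d\phi ^{\tilde{h}}(e_{a}),\tau (\phi )\right\rangle =\sum_{a}\left( -K(d\phi ^{\tilde{h}}(e_{a}),\tau (\phi ))\right) \left\vert d\phi ^{\tilde{h}}(e_{a})\wedge \tau (\phi )\right\vert ^{2},
\end{equation*}
where $K$ is the sectional curvature. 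Strict negativity forces every factor $-K$ to be positive, so the vanishing of the sum yields $d\phi ^{\tilde{h}}(e_{a})\wedge \tau (\phi )=0$ for each $a$ at every point where $\tau (\phi )\neq 0$; that is, all the image vectors $d\phi ^{\tilde{h}}(\delta _{i})=\phi _{~,i}^{\alpha }\tilde{\delta}_{\alpha }$ are proportional to $\tau (\phi )$ there.

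The endgame uses the constancy of $\left\Vert \tau (\phi )\right\Vert $ to turn this into a clean dichotomy. Either $\tau (\phi )\equiv 0$, in which case $\phi $ is harmonic and we are done; or $\left\Vert \tau (\phi )\right\Vert $ is a nonzero constant and $\tau (\phi )$ never vanishes on $BM$. In the second case the proportionality above holds at every $(x,y)$, so all the vectors $\phi _{~,i}^{\alpha }(x)$ lie in the single line spanned by $\tau ^{\alpha }(x,y)$; since the $\tilde{\delta}_{\alpha }$ at a fixed base point form a basis, the matrix $(\phi _{~,i}^{\alpha }(x))$ has rank at most $1$ at every $x$, contradicting hypothesis 2) at $x_{0}$. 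Hence $\tau (\phi )\equiv 0$.

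The step demanding the most care---and the one where the Finslerian setting genuinely differs from the Riemannian one---is the passage from the pointwise vanishing of the curvature term to the rank bound: one must keep track of the fact that $\tau (\phi )$ and the lifted frame $\tilde{\delta}_{\alpha }$ depend on the fibre coordinate $y$, whereas the rank of $\phi $ is a function of $x$ alone, namely the rank of $(\phi _{~,i}^{\alpha })$. The observation that makes the argument close is that the proportionality $d\phi ^{\tilde{h}}(\delta _{i})\parallel \tau (\phi )$ holds over the whole fibre $B_{x}$ and therefore constrains precisely that purely $x$-dependent matrix. Everything else---the identity (\ref{Weitz1}), the symmetries of $\tilde{R}$, and the positive-definiteness of $\left\langle ~,~\right\rangle $ on $HT\tilde{M}$---is inherited directly from the machinery already established.
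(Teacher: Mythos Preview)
Your proof is correct and follows essentially the same route as the paper: both use hypothesis 1) to kill the left-hand side of (\ref{Weitz1}), deduce the pointwise vanishing of the curvature term, exploit strict negativity to force $d\phi^{\tilde{h}}(\delta_i)\parallel\tau(\phi)$, and then combine the rank condition at $x_0$ with the constancy of $\left\Vert\tau(\phi)\right\Vert$ to conclude $\tau(\phi)\equiv 0$. Your endgame is phrased as a dichotomy/contradiction rather than the paper's direct argument at $x_0$, and you add useful detail on the sectional-curvature decomposition and the $y$-dependence issue, but the underlying argument is the same.
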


\begin{proof}
The proof is similar to the one in the Riemannian case, \cite{Oniciuc}. From
the hypothesis $\left\Vert \tau (\phi )\right\Vert =const.,$ in (\ref{Weitz1}%
), the left hand side is 0; but both terms in the right hand side are
nonnegative, hence: $\left\langle trace_{g}\tilde{R}(d\phi ^{\tilde{h}},\tau
(\phi ))d\phi ^{\tilde{h}},\tau (\phi )\right\rangle =0.$ Since $Riem_{%
\tilde{g}}<0,$ we must have, for all $i=\overline{1,n}:$ $d\phi ^{\tilde{h}%
}(\delta _{i})~||~\tau (\phi )$. Taking into account that at $x_{0},$ $%
rank(\phi )\geq 2,$ the only possibility is $\tau (\phi )(x_{0})=0$. Using $%
\left\Vert \tau (\phi )\right\Vert =const,$ it follows that $\tau (\phi
)\equiv 0,$ i.e., $\phi $ is harmonic.
\end{proof}

\section{Biharmonicity of the identity map}

Throughout this section, we assume that $M=\tilde{M}$ (not necessarily
compact), $\dim M=n,$ and denote the coordinates on $TM$ by $(x^{i},y^{i}).$
Considering on $M$ two metrics: a Riemannian one $\tilde{g}$ and a
Finslerian one $g,$ we will explore the biharmonicity of the
Finsler-to-Riemann mapping:%
\begin{equation}
id:(M,g)\rightarrow (M,\tilde{g}).  \label{identity}
\end{equation}

In this situation, there appear two adapted bases $(\delta _{i},\dot{\partial%
}_{i})$ and $(\tilde{\delta}_{i},\dot{\partial}_{i})$ on $TM,$ together with
the covariant differentiations given by $D,$ $\tilde{D}$ and $D^{d(id)}$.
According to \cite{Mo-book}, the tension of the identity map has the local
components 
\begin{equation}
\tau ^{i}(id)=g^{jk}(\tilde{\Gamma}_{~jk}^{i}-G_{~jk}^{i})
\label{tension_id_Mo}
\end{equation}%
(note: our $G^{i}$ is half the one in \cite{Mo-book}).

Let us denote $b:=F^{2}-\tilde{F}^{2},$ i.e.:%
\begin{equation}
g_{ij}(x,y)=\tilde{g}_{ij}(x)+b_{ij}(x,y),  \label{perturbed metric}
\end{equation}%
where the function $b=b(x,y)$ is homogeneous of degree 2 in $y$ and $%
b_{ij}=b_{\cdot ij}.$

In the geodesic equations (\ref{spray_coeff}), we express the derivatives $%
F_{~,k}^{2}$ in terms of $\tilde{D}_{\tilde{\delta}_{i}}$-covariant
derivatives, denoted in the following by double bars $_{||i}~$; we obtain: 
\begin{equation}
2G^{i}=2\tilde{G}^{i}+2B^{i},  \label{geodesic_eqns_perturbation}
\end{equation}%
where:%
\begin{equation}
2B^{i}:=\dfrac{1}{2}g^{ih}(2y_{h||j}y^{j}-F_{~||h}^{2})  \label{expr_B}
\end{equation}%
and $y_{h}:=\dfrac{1}{2}F_{~\cdot h}^{2}=g_{hj}y^{j}.$ The tension of $id$
is:%
\begin{equation}
\tau ^{i}(id)=-g^{jk}B_{\cdot j\cdot k}^{i}.  \label{tau_id}
\end{equation}

A direct computation shows that, in (\ref{expr_B}), the covariant derivative 
$2y_{h||j}$ can be rewritten as:%
\begin{equation}
2y_{h||j}=F_{~||j\cdot h}^{2}.  \label{commutation_F}
\end{equation}

\textbf{Remarks: 1) }If $b$ is parallel with respect to $\tilde{D},$ then $%
F_{~||k}^{2}=\tilde{F}_{~||k}^{2}+b_{||k}=0;$ taking into account (\ref%
{commutation_F}), we get $B^{i}=0\Rightarrow \tau ^{i}=0;$ in this case, the
identity map is harmonic, i.e., also biharmonic.

\textbf{2) }Assuming that $g$ is a Berwald-type metric, i.e., $%
G_{~jk}^{i}=G_{~jk}^{i}(x),$ then there exists, \cite{Matveev}, \cite{Szabo}%
, a Riemannian metric such that $G_{~jk}^{i}=\tilde{G}_{~jk}^{i}$; thus, the
identity map is, again, harmonic, hence, biharmonic.

We will find in the following two examples of Finslerian perturbations $b$
for which the identity of $M$ is proper biharmonic.

With $\tau ^{i}:=\tau ^{i}(id),$ the relation between the $D^{d(id)}$- and $%
\tilde{D}$-covariant derivatives of $\tau ^{i}$ is:%
\begin{equation}
D_{~\delta _{j}}^{d(id)}\tau ^{i}=\delta _{j}\tau ^{i}+\tilde{\Gamma}%
_{~jk}^{i}\tau ^{k}=\tau _{~||j}^{i}-B_{~\cdot j}^{k}\tau _{~\cdot k}^{i}.
\label{rel_derivs}
\end{equation}

\textbf{1. }Suppose that the Finslerian function satisfies:%
\begin{equation}
F_{~||h}^{2}=~\left\langle a,y\right\rangle _{g}y_{h},
\label{perturbation_x}
\end{equation}%
where $\left\langle a,y\right\rangle _{g}:=g_{ij}a^{i}y^{j}$ and $%
a^{i}=a^{i}(x)$ are components of a vector field $A=a^{i}\partial _{i}$ on $M
$ (relations (\ref{perturbation_x}) are equivalent to a first order ODE
system in $g_{ij}$).

A brief calculation using (\ref{commutation_F}) leads to: $%
2y_{j||h}y^{h}-F_{~||j}^{2}=a_{j}F^{2}$, that is, $2B^{i}=a^{i}(x)F^{2}.$
From (\ref{tau_id}), we obtain:%
\begin{equation}
\tau ^{i}=-\dfrac{1}{2}na^{i}.  \label{tau_x}
\end{equation}

Since $\tau ^{i}=\tau ^{i}(x)$, relation (\ref{rel_derivs})\ becomes simply: 
$D_{~\delta _{j}}^{d(id)}\tau ^{i}=\tau _{~||j}^{i}$ and the biharmonic
equation is written as:%
\begin{equation}
g^{jk}(\tau _{~||j||k}^{i}-\tilde{\Gamma}_{~jk}^{l}\tau _{~||l}^{i}-\tilde{R}%
_{j~lk}^{~i}\tau ^{l})=0.  \label{biharmonic_x}
\end{equation}%
Here, taking into account that $\tilde{R}_{jilk}=\tilde{R}_{lkij}$ and Ricci
identities (\ref{Ricci_R}) for $\tilde{D}$, the curvature term $\tilde{R}%
_{j~lk}^{~i}\tau ^{l}$ can be expressed by commuting $\tilde{D}$-covariant
derivatives of $\tau ^{i}.$ It turns out that a sufficient condition for the
biharmonicity of $id$ is:%
\begin{equation}
\tau _{~||j}^{i}=0.  \label{suff_cond_x}
\end{equation}

(\textit{Note:}\ this statement is always true in the Riemannian case, but
generally, not in the Finsler-to-Riemann one, where, as a rule, $\tau
^{i}=\tau ^{i}(x,y)$).

Using (\ref{tau_x}), we deduce that (\ref{suff_cond_x}) is identically
satisfied if the vector field $A^{h}=a^{i}\delta _{i}$ is parallel with
respect to $\tilde{D}.$ But, according to (\ref{Levi-Civita_lift}), this is
nothing but: $\tilde{\nabla}_{\partial _{i}}A=0.$ In other words:

\begin{proposition}
If, in (\ref{perturbation_x}), the nonzero vector field $A=a^{i}(x)\partial
_{i}$ is parallel with respect to $\tilde{g},$ then the identity map $id:(M,%
\tilde{g})\rightarrow (M,g)$ is proper biharmonic.
\end{proposition}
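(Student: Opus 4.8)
The plan is to obtain the proposition as a direct consequence of the tension formula (\ref{tau_x}) together with the already-established sufficient condition (\ref{suff_cond_x}); the only genuine work is to translate the parallelism hypothesis on $A$ into (\ref{suff_cond_x}) and then to check that the resulting biharmonic map is genuinely non-harmonic.

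First I would invoke (\ref{tau_x}), which under assumption (\ref{perturbation_x}) gives $\tau^{i}=-\tfrac{1}{2}na^{i}(x)$, so that the tension is a fixed scalar multiple of the components of $A$ and, in particular, is $y$-independent. The parallelism of $A$ with respect to $\tilde{g}$ reads $\tilde{\nabla}_{\partial _{i}}A=0$; by (\ref{Levi-Civita_lift}) this is equivalent to the parallelism of the horizontal lift $A^{h}=a^{i}\delta _{i}$ with respect to $\tilde{D}$, i.e. $a^{i}_{~||j}=0$. Since $\tau^{i}$ is a constant multiple of $a^{i}$, it follows at once that $\tau^{i}_{~||j}=0$, which is precisely condition (\ref{suff_cond_x}).

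Next I would substitute $\tau^{i}_{~||j}=0$ into the biharmonic equation (\ref{biharmonic_x}). The derivative terms $g^{jk}(\tau^{i}_{~||j||k}-\tilde{\Gamma}_{~jk}^{l}\tau^{i}_{~||l})$ vanish immediately, and, as explained just before the proposition, the curvature term $\tilde{R}_{j~lk}^{~i}\tau^{l}$ is rewritten through the Ricci identities (\ref{Ricci_R}) as a commutator of $\tilde{D}$-covariant derivatives of $\tau$; here the vertical contribution in (\ref{Ricci_R}) drops out because $\tau^{\alpha}$ is $y$-independent, so the commutator reduces to $[\tilde{D}_{\delta _{j}},\tilde{D}_{\delta _{k}}]\tau^{i}$, which also vanishes once $\tau^{i}_{~||j}=0$. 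Hence (\ref{biharmonic_x}) holds and $id$ is biharmonic. For properness I would simply note that $A$ is nonzero by hypothesis, so $\tau=-\tfrac{n}{2}A\not\equiv 0$ and $id$ cannot be harmonic.

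The proof is therefore essentially an assembly of facts already derived in this section, and I do not expect a serious obstacle. The one point requiring attention is the equivalence, via (\ref{Levi-Civita_lift}), between $\tilde{g}$-parallelism of $A$ on $M$ and $\tilde{D}$-parallelism of its horizontal lift $A^{h}$ on $TM$, since it is exactly this translation that converts the geometric hypothesis into the analytic condition (\ref{suff_cond_x}); everything else is substitution.
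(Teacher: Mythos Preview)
Your proposal is correct and follows essentially the same route as the paper: you invoke (\ref{tau_x}) to identify $\tau$ with $-\tfrac{n}{2}A$, translate the $\tilde g$-parallelism of $A$ via (\ref{Levi-Civita_lift}) into $a^{i}_{~||j}=0$, hence $\tau^{i}_{~||j}=0$, and then appeal to the sufficient condition (\ref{suff_cond_x}) already established from (\ref{biharmonic_x}) and the Ricci identities. Your explicit check of properness ($A\neq 0\Rightarrow\tau\not\equiv 0$) makes overt what the paper leaves implicit in the phrase ``nonzero vector field $A$''.
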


\bigskip 

\textbf{2. Linearized Finslerian perturbations of the Euclidean metric. }%
Assume that $(M,\tilde{g}_{ij})=(\mathbb{R}^{n},\delta _{ij})$ and the
perturbation $b_{ij}=:\varepsilon _{ij}(x,y)$ is small (linearly
approximable), that is, we may neglect all terms of degree greater than one
in $\varepsilon _{ij}$ and its derivatives, \cite{Landau}. In this case, the
inverse metric is given by: $g^{ik}=\delta ^{ik}-\varepsilon ^{ik}$ and
relation (\ref{expr_B}) becomes:%
\begin{equation*}
2B^{i}=\dfrac{1}{2}\delta ^{ih}(\varepsilon _{hj,k}+\varepsilon
_{hk,j}-\varepsilon _{jk,h})y^{j}y^{k}.
\end{equation*}

We notice that the tension $\tau $ will be of the same order of smallness as 
$\varepsilon ;$ it means that products of $\tau $ with $\varepsilon $ and
its derivatives can be neglected. For instance, we have: $B_{~l}^{h}\tau
_{~\cdot h}^{i}\simeq 0,$ which, substituted into (\ref{rel_derivs}), leads
to:%
\begin{equation*}
D_{\delta _{j}}^{d(id)}\tau ^{i}=\tau _{~,j}^{i}.
\end{equation*}%
The biharmonic equation takes the simple form: $\delta ^{lm}\tau
_{~,l,m}^{i}=0.$ Again, a sufficient condition for biharmonicity is%
\begin{equation*}
\tau _{~,l}^{i}=0,
\end{equation*}%
(or: $\tau ^{i}=\tau ^{i}(y)$), that is, $\delta ^{ih}(\varepsilon
_{hj,k,l}+\varepsilon _{hk,j,l}-\varepsilon _{jk,h,l})y^{j}y^{k}=0.$ We
obtain:

\begin{proposition}
Let the Finsler metric $g_{ij}(x,y)=\delta _{ij}+\varepsilon _{ij}(x,y)$ be
a linearized perturbation of the Euclidean metric on $\mathbb{R}^{n}.$ If
the components $\varepsilon _{ij}(x,y)$ are non-constant and affine in $x$,
then $id:(\mathbb{R}^{n},\delta _{ij})\rightarrow (\mathbb{R}^{n},g)$ is
proper biharmonic.
\end{proposition}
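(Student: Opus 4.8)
The plan is to establish the two assertions of the statement separately: biharmonicity, which is almost immediate from the reduction carried out just above, and properness (non-harmonicity), which is the genuinely delicate point. For biharmonicity I would simply invoke the sufficient condition already isolated in the linearized setting, $\tau ^{i}_{~,l}=0$, equivalently
\[
\delta ^{ih}(\varepsilon _{hj,k,l}+\varepsilon _{hk,j,l}-\varepsilon _{jk,h,l})y^{j}y^{k}=0 .
\]
The key remark is that this expression contains only \emph{second} partial derivatives of the components $\varepsilon _{ij}$ with respect to $x$. Since by hypothesis each $\varepsilon _{ij}(x,y)$ is affine in $x$, every such second derivative $\varepsilon _{ij,k,l}$ vanishes identically, so the condition holds for all $(x,y)$ and the biharmonic equation $\delta ^{lm}\tau ^{i}_{~,l,m}=0$ is satisfied. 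Equivalently, the affine hypothesis makes the first $x$-derivatives $\varepsilon _{ij,k}$ independent of $x$, so the linearized coefficients $2B^{i}$ derived from (\ref{expr_B}) and hence the tension $\tau ^{i}=-\delta ^{jk}B^{i}_{~\cdot j\cdot k}$ of (\ref{tau_id}) depend on $y$ alone; this is exactly $\tau ^{i}_{~,l}=0$.

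For properness I must show that $\tau \not\equiv 0$, i.e. that $id$ is genuinely non-harmonic. Here I would compute $\tau ^{i}=-\delta ^{jk}B^{i}_{~\cdot j\cdot k}$ explicitly from the linearized form of (\ref{expr_B}): only the first $x$-derivatives $\varepsilon _{ij,k}$ enter $B^{i}$, so $\tau ^{i}$ is a fixed linear expression in these derivatives, obtained by applying two $y$-derivatives and a $\delta $-trace. Non-constancy of $\varepsilon _{ij}$ in $x$ guarantees that the $\varepsilon _{ij,k}$ do not all vanish, and the goal is to turn this into $\tau \neq 0$, the non-constancy hypothesis being precisely what is meant to exclude the harmonic case. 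Note also that the $y$-dependence of $\varepsilon $ (needed for $g$ to be genuinely Finslerian rather than Riemannian) is what keeps $B^{i}$ from being a mere quadratic polynomial in $y$.

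The hard part is exactly this non-vanishing. The subtlety is that $\tau ^{i}$ equals, up to sign, the Euclidean $y$-Laplacian $\delta ^{jk}B^{i}_{~\cdot j\cdot k}$ of the function $B^{i}$, which is homogeneous of degree two in $y$; such a function can have vanishing Laplacian through cancellation even when $\varepsilon _{ij,k}\neq 0$, since by the classification of homogeneous harmonic functions a degree-two homogeneous $y$-harmonic function must be a harmonic quadratic polynomial in $y$, corresponding to very special, trace-degenerate choices of the $x$-linear part of $\varepsilon $. I would therefore read the non-constancy hypothesis quantitatively — requiring that the relevant trace of the first $x$-derivatives $\varepsilon _{ij,k}$ (which controls $\delta ^{jk}B^{i}_{~\cdot j\cdot k}$) does not vanish — and then check directly that under this genuine non-constancy the contracted Hessian is nonzero. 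This yields $\tau \neq 0$, so that $id$ is biharmonic but not harmonic, i.e. proper biharmonic.
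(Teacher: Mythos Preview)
Your argument for biharmonicity is correct and is exactly the paper's: the affine-in-$x$ hypothesis annihilates all second $x$-derivatives $\varepsilon_{ij,k,l}$, so the sufficient condition $\tau^{i}_{~,l}=0$ derived immediately before the proposition is satisfied, and the biharmonic equation $\delta^{lm}\tau^{i}_{~,l,m}=0$ holds.

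On properness you actually go further than the paper. The paper offers no argument for $\tau\not\equiv 0$ beyond inserting the hypothesis ``non-constant''; implicitly it takes for granted that nonvanishing first $x$-derivatives of $\varepsilon_{ij}$ force a nonzero tension. You correctly observe that this step is not automatic: since $\tau^{i}\simeq -\delta^{jk}B^{i}_{~\cdot j\cdot k}$ is (up to sign) the Euclidean $y$-Laplacian of the degree-two homogeneous function $B^{i}$, one could in principle have $B^{i}\neq 0$ yet $\tau^{i}=0$ by cancellation. Your proposed remedy---reading ``non-constant'' as a genericity/trace condition on the $x$-linear part of $\varepsilon$ so that the contracted Hessian does not vanish---is sensible, but it is a strengthening of the hypothesis rather than a proof of the proposition as literally stated. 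In summary: on biharmonicity you match the paper; on properness you have identified a gap that the paper itself leaves open, and your sketch does not close it either without that additional reading of the hypothesis.
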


\section{Second variation of the bienergy}

Take a biharmonic map $\phi :(M,g)\rightarrow (\tilde{M},\tilde{g})$ and a
smooth 2-parameter variation $f=f(\varepsilon _{1},\varepsilon _{2},x),$ $%
f(0,0,x)=\phi $ of $\phi ,$ with%
\begin{equation*}
\mathbf{V}_{1}=df^{\tilde{h}}(\partial _{\varepsilon _{1}}),~~\mathbf{V}%
_{2}=df^{\tilde{h}}(\partial _{\varepsilon _{2}}),~\ \ V_{1}:=\mathbf{V}%
_{1|\varepsilon _{1}=\varepsilon _{2}=0},~\ V_{2}:=\mathbf{V}_{1|\varepsilon
_{1}=\varepsilon _{2}=0}
\end{equation*}%
(if $M$ has a boundary, then $V_{1},$ $V_{2}$ and their $\delta _{i}$%
-covariant derivatives are assumed to vanish on $\partial M$).

The deduction of the second variation of $E_{2}$ follows the same steps as
in the Riemannian case, with two differences: in the expressions of $\tau
(f) $ and of $\Delta ^{df},$ there appear extra terms and we have to take
into account that $\Phi _{\ast }(\partial _{\varepsilon _{i}})$ is,
generally, not horizontal. Fortunately, as we will see below, these will
finally not complicate the expression of the variation.

We denote, for simplicity, $\tau :=\tau (f).$ According to (\ref%
{first_var_bienergy}), (\ref{biharmonic_Finsler_to_Riemann}): 
\begin{equation}
\dfrac{\partial E_{2}(f)}{\partial \varepsilon _{1}}=\underset{BM}{\int }%
\left\langle \tau _{2}(f),\mathbf{V}_{1}\right\rangle d\mathcal{V}_{g};
\end{equation}%
differentiating with respect to $\varepsilon _{2}:$%
\begin{equation*}
\dfrac{\partial ^{2}E_{2}(f)}{\partial \varepsilon _{1}\partial \varepsilon
_{2}}=\underset{BM}{\int }\{\left\langle D_{\partial _{\varepsilon
_{2}}}^{df}\tau _{2}(f),\mathbf{V}_{1}\right\rangle +\left\langle \tau
_{2}(f),D_{\partial _{\varepsilon _{2}}}^{df}\mathbf{V}_{1}\right\rangle \}d%
\mathcal{V}_{g}.
\end{equation*}%
At $\varepsilon _{1}=\varepsilon _{2}=0,$ since $\phi $ is biharmonic, the
second term in the right hand side term will vanish. It is thus enough to
evaluate the first one; we have: 
\begin{equation}
D_{\partial _{\varepsilon _{2}}}^{df}\tau _{2}(f)=-D_{\partial _{\varepsilon
_{2}}}^{df}(\Delta ^{df}\tau )-D_{\partial _{\varepsilon
_{2}}}^{df}(trace_{g}\tilde{R}(df^{\tilde{h}},\tau )df^{\tilde{h}}).
\label{to_evaluate}
\end{equation}

The covariant derivative of the Laplacian $-\Delta ^{df}\tau $ is:%
\begin{equation}
T_{1}:=-D_{\partial _{\varepsilon _{2}}}^{df}(\Delta ^{df}\tau
)=g^{ij}D_{\partial _{\varepsilon _{2}}}^{df}\left( D_{~\delta
_{i}}^{df}D_{~\delta _{j}}^{df}\tau -D_{~D_{\delta _{i}}\delta
_{j}}^{df}\tau -P_{i}D_{\delta _{j}}^{df}\tau )\right) .
\label{deriv_Laplacian}
\end{equation}

Commuting covariant derivatives by means of the curvature $\tilde{R}$ (twice
for the term $D_{~\delta _{i}}^{df}D_{~\delta _{j}}^{df}\tau $), taking into
account that $[\partial _{\varepsilon },\delta _{i}]=0$ and (\ref{Riem_R}),
we find: 
\begin{equation*}
\begin{array}{c}
T_{1}=g^{ij}{\Large \{}\tilde{R}(\mathbf{V}_{2},df^{\tilde{h}}(\delta
_{i}))D_{~\delta _{j}}^{df}\tau +D_{~\delta _{i}}^{df}\left( \tilde{R}(%
\mathbf{V}_{2},df^{\tilde{h}}(\delta _{j}))\tau +D_{~\delta
_{j}}^{df}D_{\partial _{\varepsilon _{2}}}^{df}\tau \right) - \\ 
-\tilde{R}(\mathbf{V}_{2},df^{\tilde{h}}(D_{\delta _{i}}\delta _{j}))\tau
-D_{~D_{\delta _{i}}\delta _{j}}^{df}D_{\partial _{\varepsilon
_{2}}}^{df}\tau -P_{i}\tilde{R}(\mathbf{V}_{2},df^{\tilde{h}}(\delta
_{j}))\tau ~-P_{i}D_{\delta _{j}}^{df}D_{\partial _{\varepsilon
_{2}}}^{df}\tau {\Large \}}.%
\end{array}%
\end{equation*}

The terms in $D_{\partial _{\varepsilon _{2}}}^{df}\tau $ can be grouped
into $-\Delta ^{df}(D_{\partial _{\varepsilon _{2}}}^{df}\tau ):$ 
\begin{equation*}
\begin{array}{c}
T_{1}=-\Delta ^{df}(D_{\partial _{\varepsilon _{2}}}^{df}\tau )+g^{ij}%
{\Large \{}\tilde{R}(\mathbf{V}_{2},df^{\tilde{h}}(\delta _{i}))D_{~\delta
_{j}}^{df}\tau + \\ 
D_{~\delta _{i}}^{df}\left( \tilde{R}(\mathbf{V}_{2},df^{\tilde{h}}(\delta
_{j}))\tau \right) -\tilde{R}(\mathbf{V}_{2},df^{\tilde{h}}(D_{\delta
_{i}}\delta _{j}))\tau -P_{i}\tilde{R}(\mathbf{V}_{2},df^{\tilde{h}}(\delta
_{j}))\tau {\Large \}}.%
\end{array}%
\end{equation*}

Splitting $D_{~\delta _{i}}^{df}\left( \tilde{R}(\mathbf{V}_{2},df^{\tilde{h}%
}(\delta _{j}))\tau \right) $ as a sum of derivatives, we recognize in the
resulting expression $\tilde{R}(\mathbf{V}_{2},\tau )\tau :$ 
\begin{equation}
\begin{array}{l}
T_{1}=-\Delta ^{df}(D_{\partial _{\varepsilon _{2}}}^{df}\tau )+\tilde{R}(%
\mathbf{V}_{2},\tau )\tau +g^{ij}{\Large \{}(D_{~\delta _{i}}^{df}\tilde{R})(%
\mathbf{V}_{2},df^{\tilde{h}}(\delta _{j}))\tau + \\ 
+2\tilde{R}(\mathbf{V}_{2},df^{\tilde{h}}(\delta _{i}))D_{~\delta
_{j}}^{df}\tau +\tilde{R}(D_{~\delta _{i}}^{df}\mathbf{V}_{2},df^{\tilde{h}%
}(\delta _{j}))\tau {\Large \}}.%
\end{array}
\label{T1}
\end{equation}

The curvature term $T_{2}:=-D_{\partial _{\varepsilon _{2}}}^{df}(trace_{g}%
\tilde{R}(df^{\tilde{h}},\tau )df^{\tilde{h}})$ in (\ref{to_evaluate}) is:%
\begin{equation}
\begin{array}{c}
T_{2}=-g^{ij}\{(D_{\partial _{\varepsilon _{2}}}^{df}\tilde{R})(df^{\tilde{h}%
}(\delta _{i}),\tau )df^{\tilde{h}}(\delta _{j})+\tilde{R}(D_{\partial
_{\varepsilon _{2}}}^{df}df^{\tilde{h}}(\delta _{i}),\tau )df^{\tilde{h}%
}(\delta _{j})+ \\ 
+\tilde{R}(df^{\tilde{h}}(\delta _{i}),D_{\partial _{\varepsilon
_{2}}}^{df}\tau )df^{\tilde{h}}(\delta _{j})+\tilde{R}(df^{\tilde{h}}(\delta
_{i}),\tau )D_{\partial _{\varepsilon _{2}}}^{df}(df^{\tilde{h}}(\delta
_{j})).%
\end{array}%
\end{equation}

Taking into account that $\tilde{R}=\tilde{R}(x)$ only, we obtain $%
D_{\partial _{\varepsilon _{2}}}^{df}\tilde{R}=\tilde{D}_{\mathbf{V}_{2}}%
\tilde{R}.$ Transforming $D_{\partial _{\varepsilon _{2}}}^{df}df^{\tilde{h}%
}(\delta _{i}),$ $D_{\partial _{\varepsilon _{2}}}^{df}df^{\tilde{h}}(\delta
_{j})$ by (\ref{commutation_rule_2}) and then using first Bianchi identity
in the second term:%
\begin{equation}
\begin{array}{c}
T_{2}=-g^{ij}\{(\tilde{D}_{\mathbf{V}_{2}}\tilde{R})(df^{\tilde{h}}(\delta
_{i}),\tau )df^{\tilde{h}}(\delta _{j})+\tilde{R}(D_{\delta _{i}}^{df}%
\mathbf{V}_{2},\tau )df^{\tilde{h}}(\delta _{j})+ \\ 
+\tilde{R}(df^{\tilde{h}}(\delta _{i}),D_{\partial _{\varepsilon
_{2}}}^{df}\tau )df^{\tilde{h}}(\delta _{j})+\tilde{R}(df^{\tilde{h}}(\delta
_{i}),\tau )\tilde{D}_{\delta _{j}}\mathbf{V}_{2}\}= \\ 
=-g^{ij}\{(\tilde{D}_{\mathbf{V}_{2}}\tilde{R})(df^{\tilde{h}}(\delta
_{i}),\tau )df^{\tilde{h}}(\delta _{j})+2\tilde{R}(df^{\tilde{h}}(\delta
_{i}),\tau )\tilde{D}_{\delta _{j}}\mathbf{V}_{2}- \\ 
-\tilde{R}(df^{\tilde{h}}(\delta _{j}),D_{\delta _{i}}^{df}\mathbf{V}%
_{2})\tau )+\tilde{R}(df^{\tilde{h}}(\delta _{i}),D_{\partial _{\varepsilon
_{2}}}^{df}\tau )df^{\tilde{h}}(\delta _{j})\}.%
\end{array}%
\end{equation}

Second, and then first Bianchi identities for the $(\tilde{D}_{\mathbf{V}%
_{2}}\tilde{R})$-term tell us that:%
\begin{equation*}
\begin{array}{c}
-g^{ij}(\tilde{D}_{\mathbf{V}_{2}}\tilde{R})(df^{\tilde{h}}(\delta
_{i}),\tau )df^{\tilde{h}}(\delta _{j})=g^{ij}\{(\tilde{D}_{\tau }\tilde{R})(%
\mathbf{V}_{2},df^{\tilde{h}}(\delta _{i}))df^{\tilde{h}}(\delta _{j})- \\ 
-(\tilde{D}_{\delta _{i}}\tilde{R})(df^{\tilde{h}}(\delta _{j}),\tau )%
\mathbf{V}_{2}-(\tilde{D}_{\delta _{i}}\tilde{R})(\mathbf{V}_{2},df^{\tilde{h%
}}(\delta _{j}))\tau \}.%
\end{array}%
\end{equation*}

Substituting into $T_{2}$ and adding: $T_{1}+T_{2}=D_{\partial _{\varepsilon
_{2}}}^{df}\tau _{2}(f),$ we get:%
\begin{equation*}
\begin{array}{l}
D_{\partial _{\varepsilon _{2}}}^{df}\tau _{2}(f)=\mathcal{J}(D_{\partial
_{\varepsilon _{2}}}^{df}\tau )+\tilde{R}(\mathbf{V}_{2},\tau )\tau +g^{ij}%
{\Large \{}(\tilde{D}_{\tau }\tilde{R})(\mathbf{V}_{2},df^{\tilde{h}}(\delta
_{i}))df^{\tilde{h}}(\delta _{j})- \\ 
-(D_{~\delta _{i}}^{df}\tilde{R})(df^{\tilde{h}}(\delta _{j}),\tau )\mathbf{V%
}_{2}+2\tilde{R}(\mathbf{V}_{2},df^{\tilde{h}}(\delta _{i}))D_{~\delta
_{j}}^{df}\tau -2\tilde{R}(df^{\tilde{h}}(\delta _{i}),\tau )D_{~\delta
_{i}}^{df}\mathbf{V}_{2}{\Large \}},%
\end{array}%
\end{equation*}%
with $\mathcal{J}$ as in (\ref{rough_Laplacian}). Using (\ref{deriv_tau_1})
and evaluating at $\varepsilon _{1}=\varepsilon _{2}=0,$ we get:

\begin{proposition}
The second variation of the bienergy of a Finsler-to-Riemann biharmonic map $%
\phi :M\rightarrow \tilde{M}$ is:%
\begin{eqnarray}
&&\dfrac{\partial ^{2}E_{2}(f)}{\partial \varepsilon _{1}\partial
\varepsilon _{2}}|_{\varepsilon _{1}=\varepsilon _{2}=0}=\underset{BM}{\int }%
\left\langle V_{1},\right. \mathcal{J}^{2}V_{2}+\tilde{R}(V_{2},\tau )\tau +
\label{second variation_E2} \\
&&+g^{ij}{\Large \{}(\tilde{D}_{\tau }\tilde{R})(\mathbf{V}_{2},df^{\tilde{h}%
}(\delta _{i}))df^{\tilde{h}}(\delta _{j})-(D_{~\delta _{i}}^{df}\tilde{R}%
)(df^{\tilde{h}}(\delta _{j}),\tau )\mathbf{V}_{2}+  \notag \\
&&+2\tilde{R}(\mathbf{V}_{2},df^{\tilde{h}}(\delta _{i}))D_{~\delta
_{j}}^{df}\tau -2\tilde{R}(df^{\tilde{h}}(\delta _{i}),\tau )D_{~\delta
_{i}}^{df}\mathbf{V}_{2}{\Large \}}\left. {}\right\rangle d\mathcal{V}_{g}. 
\notag
\end{eqnarray}
\end{proposition}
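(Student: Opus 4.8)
The plan is to compute the mixed second derivative $\dfrac{\partial^2 E_2(f)}{\partial\varepsilon_1\partial\varepsilon_2}$ by differentiating the first-variation formula one more time, and then to package the resulting bitension-derivative $D_{\partial_{\varepsilon_2}}^{df}\tau_2(f)$ into a clean expression, integrate against $\mathbf{V}_1$, and evaluate at $\varepsilon_1=\varepsilon_2=0$. The starting point is the first variation already established in the Proposition of Section 5: $\dfrac{\partial E_2(f)}{\partial\varepsilon_1}=\int_{BM}\langle\tau_2(f),\mathbf{V}_1\rangle\,d\mathcal{V}_g$, where $\tau_2(f)=-\Delta^{df}\tau-\mathrm{trace}_g\tilde R(df^{\tilde h},\tau)df^{\tilde h}$. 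Differentiating under the integral sign with respect to $\varepsilon_2$ and using that $\tilde D$ (hence $D^{df}$) is metrical produces two terms, $\langle D_{\partial_{\varepsilon_2}}^{df}\tau_2(f),\mathbf{V}_1\rangle$ and $\langle\tau_2(f),D_{\partial_{\varepsilon_2}}^{df}\mathbf{V}_1\rangle$; since $\phi$ is assumed biharmonic, $\tau_2(\phi)=0$, so the second term drops at $\varepsilon_1=\varepsilon_2=0$. Everything then hinges on computing $D_{\partial_{\varepsilon_2}}^{df}\tau_2(f)=T_1+T_2$.

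For the Laplacian piece $T_1=-D_{\partial_{\varepsilon_2}}^{df}(\Delta^{df}\tau)$, I would commute the $\partial_{\varepsilon_2}$-derivative past the two horizontal derivatives $D_{\delta_i}^{df}D_{\delta_j}^{df}$ using the curvature tensor $\tilde R$ of $\tilde D$, invoking (\ref{Riem_R}) and $[\partial_\varepsilon,\delta_i]=0$ at each commutation. The key identity here is Lemma (\ref{commutation_rule_2}), which converts $D_{\partial_{\varepsilon_2}}^{df}(df^{\tilde h}(\delta_k))$ into $D_{\delta_k}^{df}\mathbf{V}_2$ and thereby routes all the $\partial_{\varepsilon_2}$-derivatives of $\tau$ into a single group $-\Delta^{df}(D_{\partial_{\varepsilon_2}}^{df}\tau)$, with the remaining terms involving $\tilde R$, its horizontal derivative $D_{\delta_i}^{df}\tilde R$, and $\mathbf{V}_2$. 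For the curvature piece $T_2=-D_{\partial_{\varepsilon_2}}^{df}(\mathrm{trace}_g\tilde R(df^{\tilde h},\tau)df^{\tilde h})$, the Leibniz rule generates four terms; using $\tilde R=\tilde R(x)$ to write $D_{\partial_{\varepsilon_2}}^{df}\tilde R=\tilde D_{\mathbf{V}_2}\tilde R$, converting the $df^{\tilde h}$-derivatives via (\ref{commutation_rule_2}) again, and then applying the \emph{first} and \emph{second Bianchi identities} (legitimate because $\tilde R$ shares the symmetries of the Levi-Civita curvature $R^{\tilde\nabla}$) lets the several $\tilde D\tilde R$-contributions coalesce.

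The main obstacle is the bookkeeping in matching and cancelling the curvature-derivative terms from $T_1$ and $T_2$: one must use the second Bianchi identity to rewrite $-g^{ij}(\tilde D_{\mathbf{V}_2}\tilde R)(df^{\tilde h}(\delta_i),\tau)df^{\tilde h}(\delta_j)$ as a sum involving $(\tilde D_\tau\tilde R)$ and $(\tilde D_{\delta_i}\tilde R)$, and then check that the $(\tilde D_{\delta_i}\tilde R)$-terms coming from $T_1$ and $T_2$ combine into exactly one surviving $-(D_{\delta_i}^{df}\tilde R)(df^{\tilde h}(\delta_j),\tau)\mathbf{V}_2$ term after relabelling summation indices and using the pairwise symmetry $\tilde R_{jilk}=\tilde R_{lkij}$. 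Once $T_1+T_2$ is assembled, I recognize $-\Delta^{df}(D_{\partial_{\varepsilon_2}}^{df}\tau)-\mathrm{trace}_g\tilde R(df^{\tilde h},D_{\partial_{\varepsilon_2}}^{df}\tau)df^{\tilde h}=\mathcal{J}(D_{\partial_{\varepsilon_2}}^{df}\tau)$, and then apply (\ref{deriv_tau_1}), namely $D_{\partial_{\varepsilon_2}}^{df}\tau=\mathcal{J}(\mathbf{V}_2)$, at $\varepsilon_1=\varepsilon_2=0$ to replace this by $\mathcal{J}^2 V_2$. Finally I substitute back into $\int_{BM}\langle D_{\partial_{\varepsilon_2}}^{df}\tau_2(f),\mathbf{V}_1\rangle\,d\mathcal{V}_g$, set $\varepsilon_1=\varepsilon_2=0$ so that each $\mathbf{V}_i$ becomes $V_i$, and read off the stated formula (\ref{second variation_E2}). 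The self-adjointness of $\mathcal{J}$ from Lemma \ref{Lemma1} guarantees the expression is symmetric in $V_1,V_2$ up to the curvature terms, which is the expected consistency check.
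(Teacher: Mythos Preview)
Your proposal is correct and follows essentially the same route as the paper: you start from the first-variation formula, differentiate in $\varepsilon_2$, drop the $\langle\tau_2(f),D_{\partial_{\varepsilon_2}}^{df}\mathbf{V}_1\rangle$ term at $\varepsilon_1=\varepsilon_2=0$ by biharmonicity, split $D_{\partial_{\varepsilon_2}}^{df}\tau_2(f)$ into the Laplacian piece $T_1$ and the curvature piece $T_2$, commute derivatives via $\tilde R$ and (\ref{commutation_rule_2}), apply the first and second Bianchi identities to reorganize the $(\tilde D\tilde R)$-terms, and finally recognize $\mathcal{J}(D_{\partial_{\varepsilon_2}}^{df}\tau)=\mathcal{J}^2(\mathbf{V}_2)$ via (\ref{deriv_tau_1}). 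This matches the paper's argument step for step; your closing remark on symmetry via the self-adjointness of $\mathcal{J}$ is a reasonable sanity check that the paper does not make explicit.
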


In particular cases (for instance, $\tilde{M}=\mathbb{R}^{n}$ or $S^{n}$), (%
\ref{second variation_E2})\ becomes considerably simpler.

The Hessian $\mathcal{H}:(V_{1},V_{2})\mapsto \mathcal{H}(V_{1},V_{2})=%
\dfrac{\partial ^{2}E_{2}(f)}{\partial \varepsilon _{1}\partial \varepsilon
_{2}}|_{\varepsilon _{1}=\varepsilon _{2}=0}$ of the bienergy is a symmetric
bilinear form. A solution $\phi $ of the biharmonic equation is \textit{%
stable} if the quadratic form $\mathcal{H}(V,V)$ is nonnegative for any $V$.
As an example, harmonic maps are stable biharmonic maps.

\textbf{Acknowledgments. }1) The work was supported by the Sectorial
Operational Program Human Resources Development (SOP HRD), financed from the
European Social Fund and by Romanian Government under the Project number
POSDRU/89/1.5/S/59323.

2)\ Special thanks to prof. G. Munteanu for proofreading the text.


\begin{thebibliography}{99}
\bibitem{Andersson} L.E. Andersson, T. Elfving, G. H. Golub, Solution of
biharmonic equations with application to radar imaging, J. of Computational
and Applied Mathematics, 94(2) (1998) 153--180.

\bibitem{Anto} P.L. Antonelli, R.S. Ingarden, M. Matsumoto, The Theory of
Sprays and Finsler Spaces with Applications in Physics and Biology, Kluwer
Acad. Publ., 1993.

\bibitem{Anto-Slawinski} P. L. Antonelli, A. B\'{o}na, M. A. Slawi\'{n}ski,
Seismic rays as Finsler geodesics, Nonlinear Analysis: Real World
Applications, 4(5) (2003) 711-722.

\bibitem{Anto-biology} P. L. Antonelli, S. F. Rutz, C. E. Hirakawa, The
mathematical theory of endosymbiosis I, Nonlinear Analysis: Real World
Applications, 12(6) (2011) 3238-3251.

\bibitem{Balmus} A. Balmus, Biharmonic Maps and Submanifolds, Geom. Balkan
Press, 2009.

\bibitem{Bucataru} I. Bucataru, R. Miron, Finsler-Lagrange Geometry -
Applications to Dynamical Systems, Ed. Acad. Romane, Bucharest, 2007.

\bibitem{Shen} D. Bao, S.S. Chern, Z. Shen, An Introduction to
Riemann-Finsler Ge\-o\-me\-try (Graduate Texts in Mathematics; 200),
Springer Verlag, 2000.

\bibitem{Jiang} Jiang Guoying, 2-harmonic maps and their first and second
variational formulas, Note Mat. 28 (suppl. 1) (2009) 209--232.

\bibitem{Lagrange} R. Miron, M. Anastasiei, The Ge\-o\-me\-try of Lagrange\-
Spaces: Theory and Applications, Kluwer Acad. Publ., 1994.

\bibitem{Landau} L.D. Landau, E. M. Lifschiz, Field Theory, 8th edn.,
Fizmatlit, 2006.

\bibitem{Li} Xin Li, Zhe Chang, Modified Friedmann model in Randers--Finsler
space of approximate Berwald type as a possible alternative to dark energy
hypothesis, Phys. Lett. B, 676(4--5) (2009) 173--176.

\bibitem{Lipman} Y. Lipman, R. M. Rustamov , T. A. Funkhouser, Biharmonic
distance, ACM Trans. Graph. 29(3) (2010) 27:1--27:11.

\bibitem{Matveev} V.S. Matveev, Riemannian metrics having common geodesics
with Berwald metrics, Publ. Math. Debrecen Vol 74 (3--4) (2009), 405--416.

\bibitem{Mo} X. Mo, Harmonic maps from Finsler manifolds, Illinois J. Math.
45(4) (2001) 1331-1345.

\bibitem{Mo2} X. Mo, Y. Yang, The existence of harmonic maps from Finsler
manifolds to Riemannian manifolds, Science in China Ser. A, 48(1) (2005)
115-130.

\bibitem{Mo-book} X. Mo, An Introduction to Finsler Geometry, World
Scientific, 2006.

\bibitem{Montaldo} S. Montaldo, C. Oniciuc, A survey on biharmonic maps
between Riemannian manifolds, Rev. de la Union Matem. Argentina, 47(2)
(2006) 1--22.

\bibitem{Mosel} H. von der Mosel, S. Winklmann, On weakly harmonic maps from
Finsler to Riemannian manifolds, Annales de l'Institut Henri Poincare (C)
Non Linear Analysis, 26(1) (2009) 39--57.

\bibitem{Munteanu} G. Munteanu, N. Aldea, A complex Finsler approach of
gravity, Int. J. of Geom. Methods in Modern Physics, 9(7) (2012).

\bibitem{Nishikawa} S. Nishikawa, Harmonic maps of Finsler manifolds, Mihai,
Adela (ed.) et al., Topics in differential geometry, Ed. Acad. Romane (2008)
207-247.

\bibitem{Ootsuka} T. Ootsuka, Finsler structure in thermodynamics and
statistical mechanics, Acta Math. Acad. Paedagogicae Nyiregyhaziensis 01
(2010) 26:377-382.

\bibitem{Oniciuc} C. Oniciuc, Biharmonic maps between Riemannian manifolds,
An. Stiint. Univ. Al.I. Cuza, Iasi, 48 (2002) 237-248.

\bibitem{Oniciuc2} C. Oniciuc, On the second variation formula for
biharmonic maps to a sphere, Publ. Math. Debrecen, 61/3-4, (28) (2002).

\bibitem{Selvadurai} A.P.S. Selvadurai, Partial Differential Equations in
Mechanics 2: The Biharmonic Equation, Poisson's Equation, Springer, 2000.

\bibitem{Shen-Zhang} Y. Shen, Y. Zhang, Second variation of harmonic maps
between Finsler manifolds, Science in China Ser. A, 47(1) (2004) 39-51.

\bibitem{Shen1} Z. Shen, Lectures on Finsler Geometry, World Scientific
Publ., 2001.\ 

\bibitem{Szabo} Z. Szab\'{o}. Positive definite Berwald spaces, Tensor 35
(1) (1981) 25--39.

\bibitem{Vacaru} S. Vacaru, Einstein Gravity, Lagrange-Finsler Geometry, and
Nonsymmetric Metrics, Symmetry, Integrability and Geometry: Methods and
Applications 4 (2008) 071, 29 p.

\bibitem{Yajima} T. Yajima, H. Nagahama, Finsler geometry of seismic ray
path in anisotropic media, Proc. R. Soc. A (2009) doi:
10.1098/rspa.2008.0453.

\bibitem{Yajima1} T. Yajima, K. Yamasaki, H. Nagahama, Finsler metric and
elastic constants for weak anisotropic media, Nonlinear Analysis: Real World
Applications, 12(6) (2011) 3177-3184.

\bibitem{Zhong} Zhong Chunping, Zhong Tongde, Horizontal Laplace operator in
real Finsler vector bundles, Acta Mathematica Scientia, 28B(1) (2008)
128--140.

\bibitem{Zhong1} Zhong Chunping, Laplacians on the tangent bundle of Finsler
manifold, Balkan Journal of Geometry and Its Applications, 16(1) (2011)
170-181.
\end{thebibliography}
\end{document}